\documentclass[10pt]{amsart}
\usepackage{amssymb}
\usepackage{amsbsy}
\usepackage{amscd}
\usepackage[mathscr]{eucal}
\usepackage{verbatim}
\usepackage{version}
\usepackage{color}
\usepackage[matrix,arrow]{xy}
\usepackage{graphicx}

\def\cal{\mathcal}
\def\Bbb{\mathbb}

\newenvironment{NB}{
\color{red}{\bf NB}. \footnotesize 
}{}
\excludeversion{NB}
\newenvironment{NB2}{
\color{blue}{\bf NB}. \footnotesize
}{}

\newcommand{ \Supp}{\operatorname{Supp}}
\newcommand{\Ext}{\operatorname{Ext}}
\newcommand{\Hom}{\operatorname{Hom}}

\newcommand{\rk}{\operatorname{rk}}

\newcommand{\NS}{\operatorname{NS}}

\newcommand{\Pic}{\operatorname{Pic}}
\newcommand{\ch}{\operatorname{ch}}

\newcommand{\Hilb}{\operatorname{Hilb}}

\newcommand{\Coh}{\operatorname{Coh}}

\newcommand{\Amp}{\operatorname{Amp}}

\font\b=cmr10 scaled \magstep5
\def\bigzerou{\smash{\lower1.7ex\hbox{\b 0}}}

\numberwithin{equation}{section}
\theoremstyle{plain}
 \newtheorem{thm}{Theorem}[section]
 \newtheorem{lem}[thm]{Lemma}
 \newtheorem{prop}[thm]{Proposition}
 \newtheorem{cor}[thm]{Corollary}
 
 \newtheorem{prob}[thm]{Problem}
\theoremstyle{definition}
 \newtheorem{defn}[thm]{Definition}
 
\theoremstyle{remark}
 \newtheorem{rem}[thm]{Remark}
 \newtheorem{ex}[thm]{Example}
 
\begin{document}

\title{A note on the moduli of stable sheaves on elliptic ruled surfaces.}
\author{K\={o}ta Yoshioka}
\address{Department of Mathematics, Faculty of Science,
Kobe University,
Kobe, 657, Japan
}
\maketitle

\section{Introduction}
Let $\pi:X \to C$ be a ${\Bbb P}^1$-bundle over a smooth projective curve $C$
of genus $g$. Let $C_0$ be a minimal section of $\pi$ 
with $(C_0^2)=-e$ and $f$ a fiber of $\pi$.
Then $K_X \equiv -2C_0-(2-2g+e)f$. 
If $((K_X+f) \cdot H)<0$. then $\Ext^2(E,E(-f))=0$ for all $\mu$-semi-stable 
sheaves. Thus $E$ is a prioritary sheaf in the sense of Walter.
Then Walter \cite{Walter}
proved the irreducibility by using deformation theory
of prioritary sheaves. 

In this note, we shall slightly generalize the irreducibility of the moduli spaces.
Thus we shall prove the irreducibility of the moduli spaces under the assumption
$(K_X \cdot H)<0$. 
We note that $(K_X \cdot H)<0$ for any ample divisor $H$, if $g=0,1$.
Indeed $-K_X$ is numerically equivalent to an effective divisor.
If $g=0$, then $-K_X-f$ is also effective, and hence the irreducibility is a consequence of \cite{Walter}. 
On the other hand if $g=1$, 
then $-K_X-f$ is numerically equivalent to an effective divisor
if and only if $e\geq 1$. So we are interested in the case where $e=0,-1$.
We also study the existence condition of the $\mu$-semistable sheaves
in these cases.

For a coherent sheaf on $X$, $\rk E$ denotes the rank of $E$ and we set
$\tau(E):=(\rk E,c_1(E),\chi(E))$. 
Let ${\cal M}_H({\bf e})^{\mu ss}$ be the moduli stack of $\mu$-semi-stable sheaves $E$ with 
$\tau(E)={\bf e}$.
Then we have the following result.
\begin{thm}[{Proposition \ref{prop:irred}, \ref{prop:1}}]\label{thm:(HK_X)}
Let $X$ be an elliptic ruled surface and $H(x):=C_0+ xf$ be an ample divisor on $X$.
\begin{NB}
Assume that $(K_X \cdot H)<0$. 
\end{NB}
Then for 
${\bf e}=(r,\xi,\chi) \in {\Bbb Z}_{>0} \times \NS(X) \times {\Bbb Z}$,
with $0 \leq (\xi \cdot f)<r$, we have the following.
\begin{enumerate}
\item[(1)]
${\cal M}_{H(x)}(r,\xi,\chi)^{\mu ss}$ is irreducible.
\item[(2)]
${\cal M}_{H(x)}(r,\xi,\chi)^{\mu ss} \ne \emptyset$ if and only if
$r^2 \Delta({\bf e}) \geq r_1 (r-r_1)(x-\frac{e}{2})$, where $r_1=(\xi \cdot f)$.
For the definition of $\Delta({\bf e})$, see \eqref{eq:Delta2}.
\item[(3)]
Assume that $r>1$.
Let $E \in {\cal M}_{H(x)}(r,\xi,\chi)^{\mu ss}$ be a general member. Then
$E$ is a $\mu$-stable locally free sheaf such that
$E_{|\pi^{-1}(t)}$ is a rigid vector bundle for a general $t \in C$. 
\end{enumerate}
\end{thm}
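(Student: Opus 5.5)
The plan is to argue by induction on the rank $r$ and run deformation theory on the stack, the way Walter does for prioritary sheaves, with the vanishing $\Ext^2(E,E(-f))=0$ replaced by the weaker vanishing $\Ext^2(E,E)_0=0$. Since $g=1$ we have $(K_X\cdot H(x))<0$ for every ample $H(x)$. For $\mu$-semistable $E$, Serre duality gives $\Ext^2(E,E)\cong\Hom(E,E(K_X))^\vee$. Any nonzero map $E\to E(K_X)$ would contradict $\mu$-semistability, because $\mu(E(K_X))<\mu(E)$. Hence $\Ext^2(E,E)=0$, so the stack ${\cal M}_{H(x)}({\bf e})^{\mu ss}$ is smooth of the expected dimension $-\chi(E,E)=r^2\Delta({\bf e})\cdot(\text{const})+\dots$; in any case every irreducible component has the same dimension $-\chi({\bf e},{\bf e})$. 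The whole problem is therefore to show that the stack is connected, or more concretely that a general member of every component has a fixed shape.

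For part (3) and the component analysis, I restrict to a general fiber $\pi^{-1}(t)\cong\Bbb P^1$. There $E|_{\pi^{-1}(t)}\cong\bigoplus{\cal O}(a_i)$ with $\sum a_i=r_1$. The splitting type is upper semicontinuous, and the locus where it is non-rigid, i.e.\ where $\max a_i-\min a_i\geq 2$, is controlled by the relative $\Ext^1(E,E)$ over $C$. Let $F$ be the relative Harder--Narasimhan filtration with respect to $f$. I compare $\dim$ of the stratum of sheaves with a given non-rigid generic splitting type against $-\chi({\bf e},{\bf e})$ as follows. Such sheaves are extensions of sheaves $E_i$ of smaller rank with fiber degrees $d_i$. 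I bound the dimension of the stratum by $\sum_i \dim{\cal M}(E_i)+\sum_{i<j}\dim\Ext^1(E_j,E_i)-\dim\Hom(E_j,E_i)$, which equals $-\sum_i\chi(E_i,E_i)-\sum_{i<j}\chi(E_j,E_i)$. Using $\Ext^2(E_j,E_i)=0$ (which needs the slope and fiber-degree inequalities for these pieces), the codimension comes out as $\sum_{i<j}\chi(E_i,E_j)$. Riemann--Roch on the elliptic ruled surface shows this is positive when $d_i-d_j\geq 2$. So these strata cannot be components.

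Similarly, non-locally-free sheaves lie in the boundary: $E\subset E^{\vee\vee}$ with quotient of length $n$ gives a stratum of codimension $\geq n$, provided $\Ext^2=0$ for the double duals, which holds by induction. Strictly semistable sheaves are extensions of lower-rank semistable sheaves with proportional invariants and again form a positive-codimension locus. Hence every component has general member $\mu$-stable, locally free, and generically fiberwise rigid. Such bundles are elementary-transform data of a fixed type: with $r_1=(\xi\cdot f)$, the rigid type is ${\cal O}(a)^{r-r_1}\oplus{\cal O}(a+1)^{r_1}$. So $E$ sits in $0\to\pi^*V_1(aC_0)\to E\to\pi^*V_2((a+1)C_0)\otimes(\dots)\to 0$, possibly after elementary transformations along fibers. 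Here $V_1,V_2$ are vector bundles on the elliptic curve $C$ of ranks $r-r_1$ and $r_1$. I then use Atiyah's classification: moduli of semistable bundles on $C$ of given rank and degree are irreducible. This shows the family of such extensions is parametrized by an irreducible space dominating every component, which gives (1).

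For (2), I use the same description. Since $E_{|f}$ is rigid, $\pi_*$ of suitable twists yields the filtration above. The $\mu$-semistability of $E$ with respect to $H(x)$ forces inequalities on the degrees of $V_1,V_2$ and the number of elementary transformations. Translating via Riemann--Roch into $\Delta$ gives precisely $r^2\Delta\geq r_1(r-r_1)(x-\frac e2)$. Conversely, when the inequality holds, I construct a nonsplit extension with $V_i$ semistable (stable when possible). I check $\mu$-semistability by testing against the subsheaves $\pi^*V_1$-type destabilizers only, since all other subsheaves have smaller fiber degree.

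I expect the main obstacle to be the codimension estimate for non-rigid generic splitting type. It requires $\Ext^2(E_j,E_i)=0$ for the HN pieces, which are not themselves $H$-semistable, and the Riemann--Roch positivity must hold uniformly in $x$, including the case $e=-1$. My first attempt would be to use relative duality along $\pi$ to reduce the $\Ext^2$ vanishing to $\Hom(E_i,E_j(K_X))=0$, which follows from $d_i>d_j$ and $(K_X\cdot f)=-2$.
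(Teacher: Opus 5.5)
Your strategy (the stack is smooth, so show that the general member of every component has one fixed shape, and that this shape forms an irreducible family) is a different route from the paper's. However, the step that is supposed to deliver irreducibility is not established. It is exactly the step where the paper uses an external input instead. The paper gets density of ${\cal M}_H({\bf e})^{\mu s}_0$ from the dimension counts of Lemmas \ref{lem:mu-s} and \ref{lem:non-locallyfree}. It then gets irreducibility of $M_H({\bf e})^{\mu s}_0$ from the Beauville--Markman theorem for primitive classes with $(K_X\cdot H)<0$, transported through a blow-up via Nakashima (Corollary \ref{cor:Beauville}).

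Concretely, as written your extension goes the wrong way. For any twist pulled back from $C$, $\Ext^1(\pi^*V_2((a+1)C_0),\pi^*V_1(aC_0))=H^1(X,\pi^*(V_2^{\vee}\otimes V_1)(-C_0))=0$, because $R\pi_*{\cal O}_X(-C_0)=0$. So all your extensions split and none is $\mu$-stable. Since $0\leq r_1<r$ forces $a=0$, the canonical filtration is $0\to\pi^*G(C_0)\to E\to Q\to 0$ with $G=\pi_*E(-C_0)$ of rank $r_1$, as in \eqref{eq:gen}.

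Even then, $Q\cong\pi^*W$ only when $E$ has no jumping fibers. Otherwise $\pi^*G(C_0)$ need not be saturated, $Q$ need not be locally free, and $\deg G=\chi(E(-C_0))+h^0(R^1\pi_*E(-C_0))$ can jump. So the rigid-type locus is a union of families indexed by discrete data. You must prove that every family except the one without jumping fibers has dimension $<-\chi({\bf e},{\bf e})$. ``Possibly after elementary transformations'' plus Atiyah does not do this. Atiyah's result on semistable bundles is also the wrong input, since $G,W$ need not be semistable; you would need irreducibility of the whole stack of bundles on $C$ and $\Ext^2(Q,\pi^*G(C_0))=0$.

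For $r_1=0$, $\Delta>0$ this is the entire problem. There is no such subsheaf, and since $\Delta(\pi^*V)=0$, every $E$ is a nontrivial modification of $\pi^*\pi_*E$ along fibers. You would have to analyze that parameter space from scratch.

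The existence half of (2) also breaks. For $F\subset E$ of rank $s$ with $(c_1(F)\cdot f)=t$, one has $((\mu(F)-\mu(E))\cdot H(x))=(\frac{t}{s}-\frac{r_1}{r})(x-e)+c_F$ with $c_F$ independent of $x$. So subsheaves of small fiber degree become more destabilizing as $x$ decreases. You cannot test a single subsheaf, and ``all other subsheaves have smaller fiber degree'' is neither true nor the relevant condition.

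The paper does no subsheaf analysis. The extension \eqref{eq:gen} is trivially $\mu$-semistable at $x=x_0$, being built from two semistable pieces of equal $H(x_0)$-slope. Non-emptiness for $x<x_0$ then follows from the monotonicity $N(x)\subset N(y)$ for $x\geq y$ (Proposition \ref{prop:N(x)}). That monotonicity rests on $(D\cdot K_X)\geq 0$ (Lemma \ref{lem:(DK_X)}) and the dimension bound of Lemma \ref{lem:pss}. This wall-crossing argument is what your proposal lacks.

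Your ``only if'' direction, on the other hand, is a genuine shortcut compared with the paper, which deduces it from irreducibility. If the general member of the component has rigid generic type, then $\pi^*G(C_0)\subset E$ with $\deg G\geq\chi(E(-C_0))=d_1$ forces $x\leq x_0$. But it inherits the incompleteness of your codimension estimate for non-rigid types. As you note, that estimate still needs $\Delta(E_i)\geq 0$ for pieces that are not $H$-semistable, and control of every pair $i<j$ when there are more than two pieces. Also, the codimension is $-\sum_{i<j}\chi(E_i,E_j)$, not $+\sum_{i<j}\chi(E_i,E_j)$.
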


Let $X$ be an elliptic ruled surface and
assume that $-K_X$ is nef, that is, $e=0,-1$.
Then $(K_X^2)=0$ and there is an adjacent chamber for ${\bf e}=(r,\xi,\chi)$.
We denote the moduli stack by ${\cal M}_{-K_X}({\bf e})^{\mu ss}$.
\begin{cor}\label{cor:main}
Let $X$ be an elliptic ruled surface and
assume that $-K_X$ is nef, that is, $e=0,-1$. Then
${\cal M}_{-K_X}(r,\xi,\chi)^{\mu ss} \ne \emptyset$ if and only if $\Delta({\bf e}) > 0$
or $r \mid (\xi \cdot f)$ and $\Delta({\bf e})=0$.
\end{cor}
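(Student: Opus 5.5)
We work throughout with the coordinates of \eqref{eq:Delta2}. Since $g=1$ we have $K_X \equiv -2C_0-ef$, so $-K_X \equiv 2(C_0+\tfrac{e}{2}f)=2H(\tfrac{e}{2})$. For $e=0,-1$ the class $H(\tfrac e2)$ is nef, not ample, and satisfies $(H(\tfrac e2)^2)=0$. Thus $\mu$-semistability with respect to $-K_X$ is the boundary case $x \to \tfrac e2$ of the family $H(x)$. The plan is to compare $-K_X$-semistability with $H(x)$-semistability for $x$ in the chamber adjacent to $\tfrac e2$, and then read everything off Theorem \ref{thm:(HK_X)}.

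\emph{Normalization.} Tensoring with $\mathcal{O}_X(kC_0)$ preserves $\Delta$ and $\mu$-semistability for every polarization, and it changes $(\xi\cdot f)$ by $kr$. Hence we may assume $0\le r_1:=(\xi\cdot f)<r$. With this normalization the condition $r\mid(\xi\cdot f)$ becomes $r_1=0$.

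\emph{Existence.} Let $x>\tfrac e2$ be close to $\tfrac e2$. Suppose $\Delta({\bf e})>0$, or $r_1=0$ and $\Delta({\bf e})=0$. In either case $r^2\Delta({\bf e})\ge r_1(r-r_1)(x-\tfrac e2)$ holds for $x$ close enough to $\tfrac e2$; in the first case this uses that $\Delta$ takes values in a discrete set. Theorem \ref{thm:(HK_X)}(2) then gives an $H(x)$-semistable sheaf $E$. If $F\subset E$, then $\mu_{H(y)}(F)\le\mu_{H(y)}(E)$ holds for all $y$ in the chamber, because semistability is constant on a chamber. Letting $y\to\tfrac e2$ gives $\mu_{-K_X}(F)\le\mu_{-K_X}(E)$. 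So $E\in{\cal M}_{-K_X}({\bf e})^{\mu ss}$.

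\emph{Non-existence.} Let $E$ be $-K_X$-semistable and let $0\subset E_1\subset\cdots\subset E_n=E$ be its Harder--Narasimhan filtration with respect to $H(x)$, $x$ in the adjacent chamber. The factors $F_i$ have ranks $r_i$ and first Chern classes $\xi_i$, and they all have the same $-K_X$-slope. Write $D_{ij}=\xi_i/r_i-\xi_j/r_j=aC_0+bf$. The condition $(D_{ij}\cdot H(\tfrac e2))=0$ gives $b=\tfrac{ea}{2}$, so $D_{ij}$ is a multiple of $H(\tfrac e2)$ and $(D_{ij}^2)=0$. The standard additivity formula for the discriminant along a filtration therefore has no cross terms:
\[
\Delta(E)=\sum_i \tfrac{r_i}{r}\,\Delta(F_i).
\]
Each $F_i$ is $H(x)$-semistable. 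After twisting it into the normalized range (this does not change $\Delta(F_i)$), Theorem \ref{thm:(HK_X)}(2) gives $\Delta(F_i)\ge0$, with equality only if $r_i\mid(\xi_i\cdot f)$. Hence $\Delta(E)\ge0$, and if $\Delta(E)=0$ then every factor has $\Delta(F_i)=0$ and integral fibre degree $(\xi_i\cdot f)/r_i$.

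\emph{Main obstacle.} The hard step is the final implication: if $\Delta(E)=0$, we must show $r\mid(\xi\cdot f)$. We know each $(\xi_i\cdot f)/r_i=:a_i$ is an integer, and $H(x)$-ordering forces $a_1>a_2>\cdots$ if $n>1$. So it suffices to show $n=1$, i.e.\ that $E$ itself is $H(x)$-semistable. I would attack this by describing the sheaves with $\Delta=0$ and integral fibre degree explicitly. By Theorem \ref{thm:(HK_X)}(3) and the rigidity on fibres, such an $F_i$ should be of the form $\pi^*V_i\otimes\mathcal{O}_X(a_iC_0)$ with $V_i$ semistable on $C$. One can then compute $\Hom$ and $\Ext^1$ between consecutive factors. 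The expected outcome is either that the extension splits off a $-K_X$-destabilizing subsheaf, contradicting the $-K_X$-semistability of $E$, or that the extension forces the $a_i$ to coincide. If this direct analysis fails, I would instead recheck the normalization of $\Delta$ in \eqref{eq:Delta2}, in case it contains a correction term that already accounts for non-integral fibre degree.
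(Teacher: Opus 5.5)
You read ${\cal M}_{-K_X}({\bf e})^{\mu ss}$ as the stack of sheaves that are $\mu$-semistable with respect to the nef, non-ample class $-K_X\equiv 2H(\frac e2)$ itself. With that reading the ``only if'' direction is false when $\Delta({\bf e})=0$, so the step you call the main obstacle cannot be carried out. Take $E=\mathcal{O}_X^{\oplus 2}\oplus\mathcal{O}_X(-K_X)$. Since $(K_X^2)=0$, all three summands have $-K_X$-degree $0$. Since $-K_X$ is nef, each line bundle, and hence the direct sum, is $\mu$-semistable for $-K_X$. But $(c_1(E)^2)=0$ and $c_2(E)=0$ give $\Delta(E)=0$, while $(c_1(E)\cdot f)=2$ is not divisible by $r=3$. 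For $e=0$, $E=\mathcal{O}_X\oplus\mathcal{O}_X(C_0)$ is a smaller example, and it is exactly the configuration of your last paragraph. Its $H(x)$-Harder--Narasimhan factors are $\mathcal{O}_X(C_0)$ and $\mathcal{O}_X$ with $a_1=1>a_2=0$, the extension splits, and nothing destabilizes $E$ for $-K_X$. So no $\Hom/\Ext^1$ computation can produce either outcome you hope for. Re-examining \eqref{eq:Delta2} will not help either: it is the usual discriminant with no correction term.

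The missing idea is the paper's convention, stated just before the corollary. Since $(K_X^2)=0$ there is a chamber adjacent to $-K_X$, and ${\cal M}_{-K_X}({\bf e})^{\mu ss}$ \emph{denotes} the stack of sheaves $\mu$-semistable with respect to $H(x)$ for $x$ in that chamber, i.e.\ $0<x-\frac e2\ll 1$. The polarization is therefore ample, and the converse is immediate. Bogomolov gives $\Delta({\bf e})\ge 0$. If $\Delta({\bf e})=0$, Lemma \ref{lem:Delta} makes $E$ $\mu$-semistable for every ample divisor, in particular for $C_0+nf$ with $n\gg0$, which forces $r\mid(\xi\cdot f)$. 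In your own terms: $E$ is itself $H(x)$-semistable, so $n=1$ automatically. After normalizing, Theorem \ref{thm:(HK_X)}(2) with $r_1>0$ would then demand $r^2\Delta({\bf e})\ge r_1(r-r_1)(x-\frac e2)>0$. Your existence paragraph (twist into $0\le r_1<r$ and apply Theorem \ref{thm:(HK_X)}(2) for $x$ close to $\frac e2$) is exactly the paper's argument. Under the correct convention, the limit $y\to\frac e2$ and the whole Harder--Narasimhan/additivity analysis are unnecessary.
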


\begin{NB}
We use Theorem \ref{thm:(HK_X)}.
If ${\cal M}_{-K_X}(r,\xi,\chi)^{\mu ss} \ne \emptyset$, then Bogomolov inequality implies
$\Delta({\bf e}) \geq 0$. 
Moreover if $\Delta({\bf e})=0$, then $r \mid (\xi \cdot f)$ by Lemma \ref{lem:.Delta}.
Conversely we assume that $\Delta({\bf e}) \geq 0$.
Since $x-e/2>0$ is sufficiently small, if $\Delta({\bf e})>0$ or $r_1=0$, then
the claim follows.

If $r_1=0$, then obviously the claim holds.
If $r_1>0$, then $\Delta>0$ by Bogomolov inequality.
Since $x-e/2>0$ is sufficiently small, our claim holds.
\end{NB}

\begin{NB}
\begin{cor}
Let $X$ be an elliptic ruled surface. Then $(H(x) \cdot K_X)<0$ for any ample divisor $H(x)$.
Thus the claims of Theorem \ref{thm:(HK_X)} hold
for any $H(x)$. 
\end{cor}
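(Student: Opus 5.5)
The plan is a direct intersection computation on $X$, followed by an appeal to Theorem \ref{thm:(HK_X)}, whose proof (Propositions \ref{prop:irred}, \ref{prop:1}) only uses the hypothesis $(K_X\cdot H)<0$. Since $g=1$, the formula in the introduction gives $K_X\equiv -2C_0-ef$. The intersection numbers on $X$ are $(C_0^2)=-e$, $(C_0\cdot f)=1$ and $(f^2)=0$. Hence for $H(x)=C_0+xf$,
\[
(K_X\cdot H(x)) = -2(C_0^2) -2x(C_0\cdot f) - e(f\cdot C_0) = 2e-2x-e = e-2x .
\]
So it suffices to show that $x>e/2$ whenever $H(x)$ is ample.

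For this I will use the classical description of the ample cone of a ruled surface over an elliptic curve (Hartshorne, V.2.20--2.21). Recall that $e\ge -1$ in the elliptic case. I split into two cases.
\begin{itemize}
\item If $e\ge 0$, then $aC_0+bf$ is ample iff $a>0$ and $b>ae$. For $H(x)$ this means $x>e$. Then $e-2x<e-2e=-e\le 0$.
\item If $e=-1$, then $aC_0+bf$ is ample iff $a>0$ and $b>\tfrac12 ae$. For $H(x)$ this means $x>-\tfrac12$. Then $e-2x<-1+1=0$.
\end{itemize}
In both cases $(K_X\cdot H(x))<0$. This argument also re-derives the remark in the introduction that $(K_X\cdot H)<0$ for every ample $H$ when $g=1$.

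Any ample divisor $aC_0+bf$ with $a>0$ is a positive rational multiple of some $H(x)$, namely $x=b/a$. Both $\mu$-semistability and the sign of $(K_X\cdot H)$ are unchanged by scaling. So it is harmless to restrict to divisors of the form $H(x)$, and the claims of Theorem \ref{thm:(HK_X)} hold for every ample $H(x)$.

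The only real content is the correct form of the ampleness criterion in the case $e=-1$. There the bound is $x>e/2$ rather than $x>e$, and the inequality $e-2x<0$ is sharp exactly at the boundary of the ample cone. This boundary is the nef class $-K_X\equiv 2C_0-f$, which has $(K_X^2)=0$ and is consistent with the discussion preceding Corollary \ref{cor:main}. I expect no further obstacle beyond citing this criterion carefully.
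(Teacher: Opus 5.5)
Your proof is correct, but it takes a different route from the justification the paper gives. The paper (Introduction) obtains $(K_X\cdot H)<0$ for every ample $H$ when $g=0,1$ from the fact that $-K_X$ is numerically equivalent to a nonzero effective divisor, which therefore pairs positively with any ample class. You instead compute $(K_X\cdot H(x))=e-2x$ explicitly and use the ample-cone criterion (Hartshorne V.2.20--2.21), i.e.\ $x>e/2$. The paper also records this criterion at the start of Section 3.1.

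The paper's route is coordinate-free and covers $g=0$ at the same time. For $e=-1$, however, it rests on the less elementary fact that the class $2C_0-f$ is represented by an effective curve, and only up to numerical equivalence: there $H^0(-K_X)=0$, so one has to twist by a $2$-torsion line bundle pulled back from $C$. Your route replaces this with the ampleness criterion for $e<0$. It also shows that the inequality degenerates exactly on the boundary ray of the ample cone, which is spanned by $-K_X$.

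Your two cases merge, since $x>e\ge e/2$ when $e\ge 0$. The deduction of the second sentence is the same in both approaches: the proofs of Propositions \ref{prop:irred} and \ref{prop:1} only use $(K_X\cdot H)<0$. Note that Proposition \ref{prop:1} needs this for all ample $H(x)$ simultaneously, because it moves $x$ across walls via Proposition \ref{prop:N(x)}, and that is exactly what the corollary supplies.
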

\end{NB}


\section{Basic results on moduli spaces}

\subsection{Notation.}
In this note, instead of using equivariant cohomology of suitable quot-schemes as in \cite{chamber}, 
we use moduli stacks of coherent sheaves.

Let ${\cal M}({\bf e})$ be the stack of coherent sheaves $E$ with $\tau(E)={\bf e}$.
Let ${\cal M}_H({\bf e})^{\mu ss}$ be the open substack of ${\cal M}({\bf e})$ consisting of
$\mu$-semi-stable sheaves
and ${\cal M}_H({\bf e})^{\mu s}$ the open substack consisting of
$\mu$-stable sheaves.
Let ${\cal M}_H({\bf e})^{ss}$ (resp. ${\cal M}_H({\bf e})^{s}$)
be the substack of ${\cal M}_H({\bf e})^{\mu ss}$ consisting of
semi-stable sheaves (resp. stable sheaves).
Let ${\cal M}_H({\bf e})^{\mu s}_0$ be the open substack of 
${\cal M}_H({\bf e})^{\mu s}$  consisting of $\mu$-stable locally free sheaves.

Let $\overline{M}_H({\bf e})$ be the moduli space of $S$-equivalence classes of semi-stable sheaves
$E$ with $\tau(E)={\bf e}$ and $M_H({\bf e})$ the open subscheme consisting of stable sheaves.
Let $M_H({\bf e})_0^{\mu s}$ be the moduli space of $\mu$-stable locally free sheaves.

For a coherent sheaf $E$ with $\rk E=r \ne 0$,
we set
\begin{equation}\label{eq:Delta1}
\begin{split}
\mu(E):=& \frac{c_1(E)}{r},\\
\Delta(E):=&
\frac{2rc_2(E)-(r-1)(c_1(E)^2)}{2r^2}\\
=& \frac{-2r \chi(E)+2r^2 \chi({\cal O}_X)-r(c_1(E) \cdot K_X)+(c_1(E)^2)}{2r^2}.
\end{split}
\end{equation}
For ${\bf e}=(r,\xi,\chi)$, we also set
\begin{equation}\label{eq:Delta2}
\mu({\bf e}):=\frac{\xi}{r},\; \Delta({\bf e}):=\frac{-2r \chi+2r^2 \chi({\cal O}_X)-r(\xi \cdot K_X)+(\xi^2)}{2r^2}.
\end{equation}


For coherent sheaves $E,F$ with $\rk E,\rk F>0$,
Riemann-Roch theorem implies
\begin{equation}
\chi(E,F)=\rk E \rk F \left( \frac{((\mu(F)-\mu(E))^2)-((\mu(F)-\mu(E)) \cdot K_X)}{2}+\chi({\cal O}_X) -\Delta(E)-\Delta(F)\right).
\end{equation}

\subsection{Stack of filtrations}

For ${\bf e}_i:=(r_i,\xi_i,\chi_i)$ $(1 \leq i \leq s)$,
let ${\cal F}_H({\bf e}_1,{\bf e}_2,...,{\bf e}_s)$ be the stack of filtrations
\begin{equation}\label{eq:filt}
0 \subset F_1 \subset F_2 \subset \cdots \subset F_s=E
\end{equation}
such that $E_i:=F_i/F_{i-1} \in {\cal M}_H({\bf e}_i)^{\mu ss}$ $(1 \leq i \leq s)$ 
 and 
$\Ext^2(E_j,E_i)=0$ ($i < j$).

\begin{prop}\label{prop:filt}
$\dim {\cal F}_H({\bf e}_1,{\bf e}_2,...,{\bf e}_s)=2r^2 \Delta({\bf e})-r^2 \chi({\cal O}_X)
+\sum_{i<j}\chi({\bf e}_i,{\bf e}_j)$.
\end{prop}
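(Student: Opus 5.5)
The plan is to induct on $s$, computing the dimension of ${\cal F}_H({\bf e}_1,\dots,{\bf e}_s)$ fibrewise over the product of the stacks ${\cal M}_H({\bf e}_i)^{\mu ss}$. At the end the result is rewritten via Riemann--Roch in terms of ${\bf e}=\sum_i {\bf e}_i$.

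\emph{Step 1: the case $s=1$.} For $E\in{\cal M}_H({\bf e}_1)^{\mu ss}$ we have, by Serre duality, $\Ext^2(E,E)\cong\Hom(E,E(K_X))^\vee$. This vanishes by $\mu$-semistability as soon as $(K_X\cdot H)<0$, which is the standing situation of this note and holds automatically on an elliptic ruled surface. Then the obstruction space vanishes, so ${\cal M}_H({\bf e}_1)^{\mu ss}$ is smooth at $E$. Its dimension at $E$ is $\dim\Ext^1(E,E)-\dim\Hom(E,E)=-\chi(E,E)$. By the Riemann--Roch formula above,
\begin{equation*}
-\chi(E,E)=2r_1^2\Delta({\bf e}_1)-r_1^2\chi({\cal O}_X),
\end{equation*}
which is the claim for $s=1$. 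I will need to check whether the paper intends this implicit assumption $(K_X\cdot H)<0$, or whether "dimension" is meant as the expected dimension; in the latter case this step is formal.

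\emph{Step 2: the inductive step.} Consider the morphism ${\cal F}_H({\bf e}_1,\dots,{\bf e}_s)\to{\cal F}_H({\bf e}_1,\dots,{\bf e}_{s-1})\times{\cal M}_H({\bf e}_s)^{\mu ss}$ sending the filtration to $(F_{s-1}\subset\cdots,\;E_s)$. The fibre over $(F_{s-1},E_s)$ is the stack of extensions $0\to F_{s-1}\to E\to E_s\to 0$, namely the quotient stack $[\Ext^1(E_s,F_{s-1})/\Hom(E_s,F_{s-1})]$. Since $F_{s-1}$ is an iterated extension of $E_1,\dots,E_{s-1}$, the hypothesis $\Ext^2(E_s,E_i)=0$ for $i<s$ gives $\Ext^2(E_s,F_{s-1})=0$ via the long exact sequences. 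Hence the fibre has constant dimension
\begin{equation*}
\dim\Ext^1(E_s,F_{s-1})-\dim\Hom(E_s,F_{s-1})=-\chi(E_s,F_{s-1})=-\sum_{i<s}\chi({\bf e}_s,{\bf e}_i).
\end{equation*}
This depends only on the numerical classes, so the stack is a vector-bundle-stack over the base and dimensions add. The main technical point is to justify this cleanly: the relative $\Ext$ sheaves must be locally free because the cohomology jumps cancel when $\Ext^2=0$. I would use the standard description of the extension stack as a two-term complex $R\Hom_p(\mathcal E_s,\mathcal F_{s-1})$ whose $H^2$ vanishes.

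\emph{Step 3: bookkeeping.} Summing gives
\begin{equation*}
\dim{\cal F}_H=\sum_i\bigl(-\chi({\bf e}_i,{\bf e}_i)\bigr)-\sum_{i<j}\chi({\bf e}_j,{\bf e}_i).
\end{equation*}
By bilinearity of $\chi$,
\begin{equation*}
-\chi({\bf e},{\bf e})=-\sum_i\chi({\bf e}_i,{\bf e}_i)-\sum_{i<j}\bigl(\chi({\bf e}_i,{\bf e}_j)+\chi({\bf e}_j,{\bf e}_i)\bigr).
\end{equation*}
Therefore
\begin{equation*}
\dim{\cal F}_H=-\chi({\bf e},{\bf e})+\sum_{i<j}\chi({\bf e}_i,{\bf e}_j)=2r^2\Delta({\bf e})-r^2\chi({\cal O}_X)+\sum_{i<j}\chi({\bf e}_i,{\bf e}_j).
\end{equation*}
The last equality uses the Riemann--Roch identity once more, and this is exactly the statement.
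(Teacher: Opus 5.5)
Your proposal is correct and is essentially the paper's argument. The paper cites the proof of [K-Y, Lem.~5.2] for the additivity $\dim {\cal F}_H({\bf e}_1,\dots,{\bf e}_s)=\sum_i \dim {\cal M}_H({\bf e}_i)^{\mu ss}-\sum_{i>j}\chi({\bf e}_i,{\bf e}_j)$, which is exactly your extension-stack fibration with unobstructed fibres $[\Ext^1(E_s,F_{s-1})/\Hom(E_s,F_{s-1})]$; it then uses Proposition \ref{prop:dim} for $\dim {\cal M}_H({\bf e}_i)^{\mu ss}=-\chi({\bf e}_i,{\bf e}_i)$ and the same bilinearity bookkeeping, and you are right that this implicitly requires $(K_X\cdot H)<0$, since part (2) of Proposition \ref{prop:dim} assumes it.
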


\begin{proof}
By the proof of \cite[Lem. 5.2]{K-Y} and Proposition \ref{prop:dim}, we see that
\begin{equation}
\begin{split}
\dim {\cal F}_H({\bf e}_1,{\bf e}_2,...,{\bf e}_s) =& \sum_i \dim {\cal M}_H({\bf e}_i)^{\mu ss}-
\sum_{i>j} \chi({\bf e}_i,{\bf e}_j)\\
=& -\chi({\bf e},{\bf e})+\sum_{i<j}\chi({\bf e}_i,{\bf e}_j).
\end{split}
\end{equation}
\end{proof}

\begin{rem}
\begin{enumerate}
\item
Assume that 
$$
\frac{(\xi_1 \cdot H)}{r_1}>\frac{(\xi_2 \cdot H)}{r_2}> \cdots>
\frac{(\xi_s \cdot H)}{r_s}.
$$
Then \eqref{eq:filt} is the Harder-Narasimhan filtration of $E$. 
\begin{NB}
Then ${\cal F}_H({\bf e}_1,{\bf e}_2,...,{\bf e}_s)$ parameterizes Harder-Narasimhan filtrations
of $E \in {\cal M}({\bf e})$.
\end{NB}
In this case, ${\cal F}_H({\bf e}_1,{\bf e}_2,...,{\bf e}_s) \to {\cal M}({\bf e})$
is an immersion. 
\item
Assume that 
$$
\frac{(\xi_1 \cdot H)}{r_1}=\frac{(\xi_2 \cdot H)}{r_2}= \cdots=
\frac{(\xi_s \cdot H)}{r_s}.
$$
Then \eqref{eq:filt} is a Jordan-H\"{o}lder filtration of $E$.
In this case, the image of ${\cal F}_H({\bf e}_1,{\bf e}_2,...,{\bf e}_s) \to {\cal M}({\bf e})$
parameterizes properly $\mu$-semi-stable sheaves having Jordan-H\"{o}lder filtrations
\eqref{eq:filt}. 
\end{enumerate}
\end{rem}

\subsection{Walls and chambers}
Let $\Amp(X)$ be the ample cone of $X$.

 Let ${\cal F}({\bf e})$ 
be the set of subsheaves $F \subset E$
 which satisfy 
\begin{enumerate}
\item[(1)]
$\tau(E)={\bf e}$, 
\item[(2)]
 $0<\rk F <\rk E$,
\item[(3)]
 $\Delta(F),\Delta(E/F) \geq 0$ and 
\item[(4)]
$\mu(E)-\mu(F) \ne 0$ and 
there is an element $H \in \Amp(X)$ with $((\mu(E)-\mu(F))\cdot H)=0$.
\end{enumerate}
\begin{defn}\label{defn:2-1}
For an element $0 \subset F
 \subset E$ of ${\cal F}({\bf e})$, we define a wall $W^F$ by
$$
W^F:=\{ H \in \Amp(X)|((\mu(E)-\mu(F)) \cdot H)=0 \}.
$$
$\cup_F W^F$ is locally finite.
We shall call chamber a  connected component of $\Amp(X) \setminus \cup_F W^F$.
\end{defn}

\begin{lem}
Assume that $(K_X \cdot H)<0$.
Let $E$ and $F$ be $\mu$-semi-stable sheaves of positive rank
satisfying
$\frac{(c_1(E) \cdot H)}{\rk E}=\frac{(c_1(F) \cdot H)}{\rk F}$.
Then $\Ext^2(E,F)=0$.
\end{lem}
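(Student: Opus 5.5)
By Serre duality, $\Ext^2(E,F) \cong \Hom(F,E(K_X))^{\vee}$, so it suffices to show that every homomorphism $\varphi: F \to E(K_X)$ vanishes. We will deduce this from the slope inequality forced by $\mu$-semi-stability together with the hypothesis $(K_X \cdot H)<0$.

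First observe that $E(K_X)$ is again $\mu$-semi-stable with respect to $H$. Indeed, twisting by a line bundle preserves $\mu$-semi-stability, and
$$
\frac{(c_1(E(K_X)) \cdot H)}{\rk E}=\frac{(c_1(E)\cdot H)}{\rk E}+(K_X \cdot H)<\frac{(c_1(E)\cdot H)}{\rk E}=\frac{(c_1(F)\cdot H)}{\rk F}.
$$
So $E(K_X)$ has strictly smaller $H$-slope than $F$.

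Now suppose $\varphi \ne 0$ and set $I:=\operatorname{im}\varphi$, a nonzero subsheaf of $E(K_X)$. If $\rk I=0$, then $I$ is a nonzero torsion subsheaf of $E(K_X)$. This is impossible, since $\mu$-semi-stable sheaves of positive rank are torsion free (as in the paper's conventions). Hence $\rk I>0$. Because $I$ is a quotient of $F$ of positive rank, $\mu$-semi-stability of $F$ gives $\mu_H(I)\ge \mu_H(F)$. Here a torsion part of the quotient only increases $(c_1\cdot H)$, so the standard quotient inequality holds for torsion free $F$. Because $I$ is a subsheaf of $E(K_X)$, $\mu$-semi-stability gives $\mu_H(I)\le \mu_H(E(K_X))$. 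Combining these,
$$
\mu_H(F)\le \mu_H(I)\le \mu_H(E(K_X))<\mu_H(F),
$$
which is a contradiction. Therefore $\varphi=0$, and hence $\Ext^2(E,F)=0$.

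The only point requiring care is the inequality $\mu_H(I)\ge\mu_H(F)$ for a quotient $I$ that might not be torsion free, together with the torsion-freeness of semi-stable sheaves. Both are standard: $\mu(F)\le \mu(F/K)$ follows from $\mu(K)\le\mu(F)$ for the kernel $K$ (if $K\neq 0$) and additivity of $\rk$ and $c_1$, and $c_1$ of a torsion sheaf has nonnegative intersection with $H$. Nothing else in the argument should cause difficulty.
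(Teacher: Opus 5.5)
Your proof is correct and is the paper's argument: Serre duality $\Ext^2(E,F)\cong\Hom(F,E(K_X))^{\vee}$, plus the strict inequality $\mu_H(E(K_X))<\mu_H(F)$ coming from $(K_X\cdot H)<0$. You have only written out the standard vanishing of homomorphisms from a $\mu$-semi-stable sheaf to one of smaller slope, which the paper leaves implicit. Your worry about a torsion part of the image is unnecessary: $I$ sits inside the torsion free sheaf $E(K_X)$, so it is torsion free.
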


\begin{proof}
Since $(-K_X \cdot H)>0$, we have
$\frac{(c_1(E(K_X)) \cdot H)}{\rk E}<\frac{(c_1(F) \cdot H)}{\rk F}$.
Hence 
$$
\Ext^2(E,F)=\Hom(F,E(K_X))^{\vee}=0.
$$
\end{proof}

\begin{prop}\label{prop:dim}
\begin{enumerate}
\item[(1)]
Let ${\cal M}$ be an irreducible component of ${\cal M}_H({\bf e})^{\mu ss}$.
Then 
$$
\dim {\cal M} \geq 2r^2 \Delta({\bf e})-r^2 \chi({\cal O}_X).
$$
\item[(2)]
Assume that $(K_X \cdot H)<0$. Then ${\cal M}_H({\bf e})^{\mu ss}$ is smooth of
$$
\dim {\cal M}_H({\bf e})^{\mu ss}=2r^2 \Delta({\bf e})-r^2 \chi({\cal O}_X).
$$
\end{enumerate}
\end{prop}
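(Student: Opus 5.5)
The plan is to use deformation theory of coherent sheaves on the stack ${\cal M}({\bf e})$. At a point $E$, the stack is locally the quotient of a Kuranishi space by $\operatorname{Aut}(E)$. The Kuranishi space is cut out in the smooth germ $\Ext^1(E,E)$ by at most $\dim \Ext^2(E,E)$ equations, with obstructions lying in $\Ext^2(E,E)$. So near $E$ we have
$$
\dim {\cal M} \geq \dim \Ext^1(E,E)-\dim \Ext^2(E,E)-\dim \Hom(E,E)=-\chi(E,E),
$$
since the automorphism group $\operatorname{Aut}(E)$ is an open subset of $\Hom(E,E)$.

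Because $\mu$-semistability is an open condition, ${\cal M}_H({\bf e})^{\mu ss}$ is open in ${\cal M}({\bf e})$. This lower bound therefore holds for every irreducible component ${\cal M}$ of ${\cal M}_H({\bf e})^{\mu ss}$, which gives (1). To make it explicit, apply the Riemann--Roch formula for $\chi(E,F)$ stated above with $F=E$. There $\mu(F)-\mu(E)=0$, so
$$
\chi(E,E)=r^2(\chi({\cal O}_X)-2\Delta({\bf e})),
$$
and hence
$$
-\chi(E,E)=2r^2\Delta({\bf e})-r^2\chi({\cal O}_X).
$$

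For (2), assume $(K_X\cdot H)<0$. Applying the preceding Lemma with $F=E$ gives $\Ext^2(E,E)=0$ for every $E\in {\cal M}_H({\bf e})^{\mu ss}$. Then the Kuranishi map vanishes identically, so the deformation functor of $E$ is unobstructed. Consequently the stack is smooth at $E$, of dimension exactly
$$
\dim \Ext^1(E,E)-\dim \Hom(E,E)=-\chi(E,E).
$$
Since this holds at every point, ${\cal M}_H({\bf e})^{\mu ss}$ is smooth of pure dimension $2r^2\Delta({\bf e})-r^2\chi({\cal O}_X)$.

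The main technical point is to justify the stack dimension formula $\dim_E {\cal M}=\dim T - \dim \operatorname{Aut}(E)$, where $T$ is the Kuranishi space. The stack is not a scheme, so this formula needs an atlas. I would realize ${\cal M}_H({\bf e})^{\mu ss}$ as a quotient stack $[Q/GL(N)]$, where $Q$ is an open subset of a Quot scheme of quotients ${\cal O}_X(-m)^{\oplus N}\to E$ for $m\gg 0$. Here $\mu$-semistable sheaves of fixed type form a bounded family, so one such $m$ works for all of them. The obstruction space of $Q$ at a point is $\Ext^1(K,E)\cong \Ext^2(E,E)$, where $K$ is the kernel; this uses the vanishing of $H^i(E(m))$ for $i>0$. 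Hence
$$
\dim_q Q\geq \dim \Hom(K,E)-\dim \Ext^2(E,E),
$$
and a standard computation relating $\Hom(K,E)$ to $\Ext^1(E,E)$ and $\Hom(E,E)$ then gives $\dim Q - N^2 \geq -\chi(E,E)$. Equality holds, with smoothness, when $\Ext^2(E,E)=0$.
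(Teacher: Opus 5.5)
Your proposal is correct and follows the paper's argument. Like the paper, you use the deformation-theoretic bound $\dim {\cal M}\geq \dim\Ext^1(E,E)-\dim\Ext^2(E,E)-\dim\Hom(E,E)=-\chi(E,E)$, evaluate it by Riemann--Roch, and get smoothness from $\Ext^2(E,E)=0$ via the preceding lemma; your Quot-scheme atlas computation just makes explicit the stack-dimension bookkeeping that the paper leaves to ``the deformation theory of coherent sheaves.''
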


\begin{proof}
By the deformation theory of coherent sheaves,
$$
\dim {\cal M} \geq \dim \Ext^1(E,E)-\dim \Ext^2(E,E)-\dim \Hom(E,E)=-\chi(E,E)
$$
for $E \in {\cal M}$.
Hence we get the first claim.
By $(K_X \cdot H)<0$, $\Ext^2(E,E)=0$ for all $E \in {\cal M}_H({\bf e})^{\mu ss}$.
Hence ${\cal M}_H({\bf e})^{\mu ss}$ is smooth.
\end{proof}

\begin{thm}[{Beauville \cite{B}, Markman \cite{Ma}}]\label{thm:Beauville}
Let $X$ be a smooth projective surface and $H$ an ample divisor on $X$
such that $(K_X \cdot H)<0$.
If $\gcd(r,\xi,\chi)=1$, then
$M_H(r,\xi,\chi)$ is a smooth, projective and irreducible.
\end{thm}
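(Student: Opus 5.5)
The statement just above is the cited theorem of Beauville and Markman, so the plan is to reconstruct its proof rather than invent a new one. The ingredients are: deformation-theoretic smoothness, which the paper already has; projectivity, coming from the coprimality condition; and irreducibility, which is the real content.

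\emph{Smoothness and projectivity.} Since $(K_X\cdot H)<0$, the Lemma before Proposition \ref{prop:dim} gives $\Ext^2(E,E)=0$ for every $\mu$-semi-stable $E$. Deformations of a stable sheaf are therefore unobstructed. Because $\Hom(E,E)=k$ for a stable $E$, the space $M_H(r,\xi,\chi)$ is smooth of dimension $-\chi(E,E)+1=2r^2\Delta-r^2\chi({\cal O}_X)+1$. For projectivity we use $\gcd(r,\xi,\chi)=1$.
\begin{itemize}
\item Whatever notion of stability the statement intends (Gieseker with respect to $H$, possibly after a twist by a generic polarization), a properly semistable sheaf would have a destabilizing subsheaf with reduced Hilbert polynomial proportional to that of $E$.
\item For general $H$ in the chamber this forces $(r',\xi',\chi')$ to be proportional to $(r,\xi,\chi)$, which the gcd condition rules out.
\item Hence $M_H=\overline{M}_H$ is projective.
\end{itemize}
If $H$ lies on a wall, I would first perturb $H$ inside its chamber, or else use the gcd condition directly with respect to $H$.

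\emph{Irreducibility: Markman's route.} Markman's method uses a universal sheaf, which exists by the gcd condition, and shows that the cohomology of $M=M_H({\bf e})$ is generated by the Künneth components of $\ch$ of the universal family. The key input is that the class of the diagonal in $M\times M$ equals $c_{top}$ of the virtual bundle $-\mathcal{E}xt^\bullet_{p}(\mathcal E_1,\mathcal E_2)$. The steps in order:
\begin{enumerate}
\item Show that $\mathcal{E}xt^2_p$ vanishes, using $(K_X\cdot H)<0$ again. Then $\mathcal{E}xt^1_p-\mathcal{E}xt^0_p$ is represented by a two-term complex of vector bundles $V_0\to V_1$ of virtual rank $\dim M$.
\item Show that $\Hom(E_1,E_2)=0$ for $E_1\ne E_2$ stable with the same invariants. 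The complex is then injective off the diagonal, and its degeneracy locus is exactly the diagonal, scheme-theoretically.
\item Apply the Thom–Porteous formula to get $[\Delta]=c_{\dim M}(V_1-V_0)$.
\end{enumerate}

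\emph{Conclusion and the main obstacle.} Taking degree-$0$ cohomology, the class $[\Delta]$ decomposes into Künneth components. If $M$ had $k$ connected components $M_1,\dots,M_k$, then $H^0(M)$ would be $k$-dimensional, spanned by the fundamental classes $[M_i]$. The tautological classes restricted to the $H^0$ part are constants given by characteristic numbers of ${\bf e}$, which are the same on every component, so they span only a one-dimensional subspace of $H^0(M)$. Since the diagonal formula forces the tautological classes to generate all of $H^0(M)$, we get $k=1$. Connectedness together with smoothness then gives irreducibility.

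The main obstacle is the scheme-theoretic identification in step 3. One has to check that the degeneracy locus has the expected codimension and is reduced; reducedness follows from the tangent space computation $\Ext^1(E,E)$ along the diagonal. An alternative route is Beauville's original argument for rational surfaces, but it does not obviously extend to the general case, so I would follow Markman's route.
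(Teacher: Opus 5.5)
Your proposal follows the paper's route. The paper's proof is exactly your step 1, namely $\Ext^2(E,F)=\Hom(F,E(K_X))^{\vee}=0$ for all $E,F\in M_H(r,\xi,\chi)$ from $(K_X\cdot H)<0$. It then invokes Markman's diagonal theorem (his Theorem 8) and the argument of his Corollary 10, which is the $H^0$-generation argument you reconstruct correctly.

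Two small corrections:
\begin{itemize}
\item Since $\dim M=1-\chi({\bf e},{\bf e})$, the complex $V_0\to V_1$ has virtual rank $\dim M-1$, not $\dim M$. That is exactly why its non-injectivity locus has expected codimension $\dim M$, giving $[\Delta]=c_{\dim M}(V_1-V_0)$ as in your step 3.
\item Projectivity genuinely needs $H$ generic for ${\bf e}$, which should be read as an implicit hypothesis of the statement. For example, on ${\Bbb P}^1\times{\Bbb P}^1$ with $H=(1,1)$, the sheaf $I_Z(1,0)\oplus I_W(0,1)$ with $|Z|=|W|$ is strictly semistable even though $\gcd(r,\xi,\chi)=1$. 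So your fallback of applying the gcd condition directly when $H$ lies on a wall does not work.
\end{itemize}
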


\begin{proof}
By $(K_X \cdot H)<0$,
$\Ext^2(E,F)=\Hom(F,E(K_X))^{\vee}=0$ for all
$E,F \in M_H(r,\xi,\chi)$.
Hence \cite[Thm. 8]{Ma} holds.
Then the proof of \cite[Cor. 10]{Ma} implies the claim.
\end{proof}

\begin{cor}\label{cor:Beauville}
Let $X$ be a smooth projective surface and $H$ an ample divisor on $X$
such that $(K_X \cdot H)<0$.
Then $M_H(r,\xi,\chi)_0^{\mu s}$ is irreducible.
\end{cor}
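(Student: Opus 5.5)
We want irreducibility of $M_H(r,\xi,\chi)_0^{\mu s}$, the moduli of $\mu$-stable locally free sheaves, under $(K_X\cdot H)<0$. We reduce to Theorem \ref{thm:Beauville} by twisting and changing $\chi$, which does not touch the locally free part in an essential way.

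\textbf{Step 1: reduce to the coprime case.} The map $E\mapsto E^{\vee\vee}$ and elementary transformations relate different $\chi$. Fix $\mu$-stable locally free $E$ with $\tau(E)=(r,\xi,\chi)$. Choose $n\ge 1$ with $\gcd(r,\xi,\chi-n)=1$; taking $n$ with $\chi-n\equiv 1 \pmod r$ works. Consider the stack $\mathcal N$ of pairs $(E,\phi)$ where $E\in M_H(r,\xi,\chi)_0^{\mu s}$ and $\phi:E\to Q$ is a surjection onto a $0$-dimensional sheaf of length $n$, with general $Q$ a sum of distinct points' skyscrapers. The kernel $F=\ker\phi$ is $\mu$-stable, since $\mu$-stability only sees $c_1$ and saturated subsheaves. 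Hence $F\in M_H(r,\xi,\chi-n)$, and $F$ is Gieseker stable.

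The fibers of $\mathcal N\to M_H(r,\xi,\chi)_0^{\mu s}$ are Quot schemes $\mathrm{Quot}(E,n)$ of a locally free sheaf. These are irreducible of dimension $n(r+1)$ (Ellingsrud–Lehn). So every irreducible component of $\mathcal N$ dominates some component of $M_H(r,\xi,\chi)_0^{\mu s}$, and the components correspond bijectively.

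\textbf{Step 2: the image in the coprime moduli space.} The map $(E,\phi)\mapsto F$ is injective, since $E=F^{\vee\vee}$ and $\phi$ is recovered as $F^{\vee\vee}\to F^{\vee\vee}/F$. Its image is the locally closed locus $Z\subset M_H(r,\xi,\chi-n)$ of $\mu$-stable $F$ with $F^{\vee\vee}/F$ of length exactly $n$. By Theorem \ref{thm:Beauville}, $M:=M_H(r,\xi,\chi-n)$ is smooth and irreducible of dimension $2r^2\Delta-r^2\chi(\mathcal O_X)$, using Proposition \ref{prop:dim} for the new $\Delta$, which equals the old $\Delta+n/r$.

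Each component of $\mathcal N$ has dimension at least $\dim M_H(r,\xi,\chi)^{\mu s}_0+n(r+1)$. By Proposition \ref{prop:dim}(2) this equals $2r^2\Delta-r^2\chi(\mathcal O_X)+1+n(r+1)$, where the $+1$ corrects for the stack versus space of stable sheaves. A computation gives $2r^2(\Delta+n/r)-r^2\chi(\mathcal O_X)+1=\dim M+1$, so the image has dimension $\dim M - n(r-1)$ at the stack level, after adjusting scalars. Consequently $Z$ is not open when $r>1$. Irreducibility of $M$ therefore does not immediately give irreducibility of $Z$, and this is the main obstacle.

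\textbf{Step 3: handle the obstacle.} The preferred fix is to change $\xi$ rather than $\chi$, so that the locally free locus stays open. Pick a divisor $D$ and use $E\mapsto E(D)$, which preserves $\mu$-stability and local freeness. The class $(r,\xi+rD,\chi')$ has the same gcd with $r$ in the $\xi$-part, so this alone may not give coprimality. If it fails, keep the elementary transformation but work on the open locus $M^{\circ}\subset M$ of $F$ with $F^{\vee\vee}/F$ of length $n$. Stratify $M\setminus M_0$ by the length of $F^{\vee\vee}/F$; each stratum fibres over $M_H(r,\xi,\chi-k)_0^{\mu s}$ with irreducible Quot-scheme fibres.

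Use smoothness and irreducibility of $M$ and induct on $\Delta$. The stratum with $k=0$ is the open dense set $M_0$ of the irreducible $M$ whenever it is nonempty, so $M_H(r,\xi,\chi-n)_0^{\mu s}$ is irreducible. Now choose instead $n\le 0$, i.e. apply the argument directly to $\chi'$ with $\gcd(r,\xi,\chi')=1$, obtained via twisting by a line bundle and adding points. To pass from $\chi'$ back to $\chi$, use that the closure of the $k$-stratum in $M$ is irreducible of the expected dimension. This follows from the deformation argument: since $\Ext^2=0$, each stratum deforms into $M_0$ and is in the closure of the open stratum.

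A cleaner alternative I would try first is to note that $\gcd(r,\xi,\chi)$ can be made $1$ by twisting by $\mathcal O_X(D)$ with $(D\cdot f)$ or $(D\cdot C_0)$ chosen suitably. This changes $\chi$ by $(\xi\cdot D)+r\chi$-terms and avoids strata entirely.
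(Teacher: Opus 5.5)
There is a genuine gap. Steps 1--2 are sound, and you correctly locate the obstacle. Modifying $E$ at points puts $F=\ker(E\to Q)$ in a stratum $Z\subset M_H(r,\xi,\chi-n)$ of codimension $n(r-1)$, and irreducibility of the coprime moduli space says nothing about $Z$. Step 3 never removes this obstacle.

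\begin{itemize}
\item What Step 3 actually proves is only that the $k=0$ stratum is open in the irreducible $M$. That is the coprime case, which is immediate from Theorem~\ref{thm:Beauville}.
\item The ``cleaner alternative'' is false: twisting cannot change $\gcd(r,\xi,\chi)$. Indeed $\chi(E(D))=\chi+(\xi\cdot D)+r\,(D\cdot(D-K_X))/2$ with $(D\cdot(D-K_X))$ even. So any common divisor of $r,\xi,\chi$ divides $r,\xi+rD,\chi(E(D))$, and conversely. (Also, $C_0$ and $f$ do not exist on a general $X$.)
\item The remaining fix is circular. That every stratum lies in the closure of the locally free locus is just irreducibility of $M$. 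It gives no control over the number of components of a positive-codimension stratum. By your own Quot-scheme fibration, irreducibility of $Z$ (or of its closure) is equivalent to irreducibility of $M_H(r,\xi,\chi)_0^{\mu s}$, which is the statement being proved.
\item A smaller slip: for $F\in M$, $F^{\vee\vee}$ is only $\mu$-semistable, so your strata do not fibre over $\mu$-stable loci.
\end{itemize}

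The missing idea is a modification that lands in an \emph{open} subset of a coprime moduli space. The paper blows up a point, $p:\widetilde X\to X$ with exceptional curve $C_1$, and modifies along the curve rather than at points: $0\to p^*E\to E'\to\mathcal O_{C_1}(-1)\to 0$. Then:
\begin{itemize}
\item $E'$ is locally free with $\chi(E')=\chi$ and $c_1(E')=p^*\xi+C_1$, and $(c_1(E')\cdot C_1)=-1$ forces coprimality.
\item $E'$ is $\mu$-stable for $H_n=np^*H-C_1$, $n\gg0$ (Nakashima), and $(H_n\cdot K_{\widetilde X})=n(H\cdot K_X)+1<0$.
\item So Theorem~\ref{thm:Beauville} gives irreducibility of $M_{H_n}(r,p^*\xi+C_1,\chi)$, hence of its open $\mu$-stable locally free part.
\item By Nakashima's theorem that open part is a Grassmannian bundle over $M_H(r,\xi,\chi)_0^{\mu s}$, which gives the claim.
\end{itemize}
This time no dimension is lost: $2r^2\Delta(E')=2r^2\Delta(E)+r-1$, and $r-1$ is exactly the dimension of the space $\mathbb P(\Ext^1(\mathcal O_{C_1}(-1),p^*E))\cong\mathbb P^{r-1}$ of such extensions for fixed $E$. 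Compare the loss of $n(r-1)$ that you computed for point modifications.
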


\begin{proof}
For the proof, 
we shall apply Theorem \ref{thm:Beauville} 
to the moduli space on a blow-up of $X$.
By using a result of Nakashima \cite[Thm. 1]{Nakashima} on the relations of moduli spaces on
$X$ and the blow-up, we shall prove the claim. 

Let $p:\widetilde{X} \to X$ be the blow-up of $X$ at a point $x$ and $C_1$ the exceptional divisor.
For $E \in M_H(r,\xi,\chi)_0^{\mu s}$,
let $E'$ be a locally free sheaf on $\widetilde{X}$ fitting in an exact sequence
\begin{equation}
0 \to p^*(E) \to E' \to {\cal O}_{C_1}(-1) \to 0.
\end{equation}
Then $E'$ is a $\mu$-stable locally free sheaf with respect to $H_n:=n p^*(H)-C_1$ for sufficiently large 
$n$ \cite[Prop. 2.5]{Nakashima}.
We note that $c_1(E')=\pi^*(\xi)+C_1$ and $\chi(E')=\chi(E)=\chi$.
Since $c_1(E')$ is primitive and $(H_n \cdot K_{\widetilde{X}})=n(H \cdot K_X)+1<0$ for $n \gg 0$,
Theorem \ref{thm:Beauville} implies $M_{H_n}(r,\pi^*(\xi)+C_1,\chi)$ is irreducible.
Since \cite[Thm 2.6]{Nakashima} holds without assuming the universal family, 
we get the irreducibility of $M_H(r,\xi,\chi)_0^{\mu s}$.
\end{proof}

\begin{rem}
If there is not a universal family, then the Grassmanian bundle in \cite[Thm 2.6]{Nakashima}
is \'{e}tale locally trivial. 
\end{rem}

\section{A study of chambers}

\subsection{Several dimension estimates}

Let $C_0+x f$ be an ample ${\Bbb Q}$-divisor on $X$.
Then $x>\frac{e}{2}$.

\begin{lem}\label{lem:(DK_X)}
Assume that $x>\frac{e}{2}$,
$(D \cdot (C_0+x f))=0$ and $(D \cdot f)>0$.
Then
$(D \cdot K_X) \geq 0$.
\end{lem}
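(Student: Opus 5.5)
The plan is to write $D$ in the basis of $\NS(X)$ and compute both intersections explicitly. Numerically $\NS(X)$ is spanned by $C_0$ and $f$, with $(C_0^2)=-e$, $(C_0\cdot f)=1$ and $(f^2)=0$. So I write $D \equiv aC_0+bf$, where $a=(D\cdot f)>0$. The condition $(D\cdot(C_0+xf))=0$ expands to $-ae+b+ax=0$, which gives $b=a(e-x)$.

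Next I compute $(D\cdot K_X)$ using $K_X\equiv -2C_0-(2-2g+e)f$. For the elliptic ruled surface of the theorem one has $g=1$, and I keep the general formula for now:
\[
(D\cdot K_X) = -2a(-e) - 2b - a(2-2g+e) = 2ae-2b-a(2-2g+e).
\]
Substituting $b=a(e-x)$ turns this into
\[
(D\cdot K_X)= 2ae-2ae+2ax-a(2-2g+e)=a\bigl(2x-e-(2-2g)\bigr).
\]
For $g=1$ this equals $a(2x-e)$. This is positive, since $a>0$ and $x>e/2$, so in particular $(D\cdot K_X)\ge 0$.

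There is no real obstacle; the computation is direct. The one point to make explicit is that the section uses $g=1$, since $X$ is an elliptic ruled surface. For general $g$ the expression $a(2x-e-2+2g)$ is still $\ge 0$ when $g\ge 1$. I will also note where ampleness enters: it is exactly the condition $x>e/2$, which the section states just before the lemma.
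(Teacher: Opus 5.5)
Your proposal is correct and is essentially the paper's proof: write $D\equiv aC_0+bf$, use $(D\cdot(C_0+xf))=0$ to get $b=a(e-x)$, and compute $(D\cdot K_X)=a(2x-e+2g-2)$. You also state explicitly that the final inequality needs $g\geq 1$, which holds here since $X$ is elliptic ruled; the paper uses this without saying so.
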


\begin{proof}
We set $D=aC_0+b f$. 
By $(D \cdot (C_0+x f))=0$ and $(D \cdot f) \geq 0$, we get
$0=-ae+b+xa$ and $a \geq 0$.
Hence $b=-a(x-e)$.
\begin{equation}
\begin{split}
(D \cdot K_X)=& ((aC_0+bf)\cdot (-2C_0-(2-2g+e)f))\\
=& 2ae-2b-(2-2g+e)a\\
=& a(2x-e+2g-2) \geq 0.
\end{split}
\end{equation}
\end{proof}

\begin{lem}\label{lem:pss}
Assume that $(K_X \cdot H(x))<0$.
\begin{equation}
\dim ({\cal M}_{H(x)}({\bf e})^{\mu ss} \setminus {\cal M}_{H(x-\epsilon)}({\bf e})^{\mu ss})<
2r^2 \Delta({\bf e})+r^2(g-1).
\end{equation}
\end{lem}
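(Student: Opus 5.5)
The idea is to stratify the locus by Harder--Narasimhan type with respect to $H(x-\epsilon)$. Take $E \in {\cal M}_{H(x)}({\bf e})^{\mu ss}$ that is not $\mu$-semi-stable for $H(x-\epsilon)$, with $0<\epsilon \ll 1$. Let
$$0 \subset F_1 \subset \cdots \subset F_s=E$$
be its Harder--Narasimhan filtration with respect to $H(x-\epsilon)$, with factors $E_i$ of type ${\bf e}_i=(r_i,\xi_i,\chi_i)$.

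First we pin down the factors. Since $E$ is $\mu$-semi-stable for $H(x)$ and $\epsilon$ is small (walls are locally finite), every $E_i$ is $\mu$-semi-stable for $H(x)$ as well, and $(\mu({\bf e}_i)\cdot H(x))$ is the same for all $i$. Hence $\Ext^2(E_j,E_i)=0$ by the lemma preceding Proposition \ref{prop:dim}, using $(K_X\cdot H(x))<0$. Moreover $(K_X \cdot H(x-\epsilon))<0$ still holds for small $\epsilon$. So the locus is covered by the images of the finitely many stacks ${\cal F}_{H(x-\epsilon)}({\bf e}_1,\dots,{\bf e}_s)$ with $s\ge 2$ and strictly decreasing $H(x-\epsilon)$-slopes. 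Each of these maps into ${\cal M}({\bf e})$ by an immersion (Remark (1)). By Proposition \ref{prop:filt}, and noting $\chi({\cal O}_X)=1-g$, it therefore suffices to show
$$\sum_{i<j}\chi({\bf e}_i,{\bf e}_j)<0 .$$

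Next we bound each term via the Riemann--Roch formula. Put $D_{ij}=\mu({\bf e}_j)-\mu({\bf e}_i)$. Then $(D_{ij}\cdot H(x))=0$. For $i<j$ the slope with respect to $H(x-\epsilon)=H(x)-\epsilon f$ decreases, so $-\epsilon(D_{ij}\cdot f)<0$, that is, $(D_{ij}\cdot f)>0$. Since $H(x)$ is ample, the Hodge index theorem gives $(D_{ij}^2)\le 0$, and Lemma \ref{lem:(DK_X)} gives $(D_{ij}\cdot K_X)\ge 0$. By the Bogomolov inequality, $\Delta({\bf e}_i)\ge 0$. Combining these,
$$\chi({\bf e}_i,{\bf e}_j)\le r_ir_j\bigl(\chi({\cal O}_X)-\Delta({\bf e}_i)-\Delta({\bf e}_j)\bigr)\le r_ir_j(1-g).$$

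The main obstacle is strictness, especially when $g=1$, where the bound above gives only $\le 0$. I would sharpen the Hodge index step. Writing $D=aC_0+bf$ with $a>0$ and $b=-a(x-e)$, we get $(D^2)=-a^2e+2ab=-a^2(2x-e)<0$, since $x>e/2$. So $(D_{ij}^2)<0$ strictly, and each term satisfies $\chi({\bf e}_i,{\bf e}_j)<r_ir_j(1-g)\le 0$ when $g\ge 1$. This yields strict negativity because $s\ge 2$. For $g=0$ the $r_ir_j(1-g)$ term is positive, so there I would instead use $-\frac{1}{2}(D\cdot K_X)=-\frac{1}{2}a(2x-e-2)$ together with $-\frac{1}{2}(D^2)$, or fall back on Walter's result, as the introduction indicates. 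Since the paper concerns elliptic ruled surfaces, the case $g=1$ is the essential one and is handled by the strict inequality above.
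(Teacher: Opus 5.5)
Your proof is correct and follows the paper's own argument. You take the Harder--Narasimhan filtration for $H(x-\epsilon)$ and apply Proposition \ref{prop:filt}, then combine $(D_{ij}\cdot f)>0$, Lemma \ref{lem:(DK_X)}, Hodge index and Bogomolov to get $\chi({\bf e}_i,{\bf e}_j)<0$ for $i<j$, making explicit the strict inequality $(D_{ij}^2)=-a^2(2x-e)<0$, the $\Ext^2$-vanishing, and $(K_X\cdot H(x-\epsilon))<0$, all of which the paper uses silently.

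Like the paper's argument, and like Lemma \ref{lem:(DK_X)} itself, your proof needs $g\ge 1$, which is the paper's setting. Your $g=0$ fallback is therefore moot, and in fact the strict inequality genuinely fails there: ${\cal O}_X\oplus{\cal O}_X(C_0-f)$ on ${\Bbb P}^1\times{\Bbb P}^1$ at $x=1$ has $\chi({\bf e}_1,{\bf e}_2)=\chi({\cal O}_X(C_0-f))=0$.
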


\begin{proof}
For $E \in {\cal M}_{H(x)}({\bf e})^{\mu ss} \setminus {\cal M}_{H(x-\epsilon)}({\bf e})^{\mu ss}$,
let $0 \subset F_1 \subset F_2 \subset \cdots \subset F_s=E$ be the Harder-Narasimhan filtration
of $E$ with respect to
$H(x-\epsilon)$.
We set $E_i:=F_i/F_{i-1}$.
Then 
$$
\frac{(c_1(E_i) \cdot (H(x-\epsilon)))}{\rk E_i}>\frac{(c_1(E_j) \cdot (H(x-\epsilon)))}{\rk E_j},\;
\frac{(c_1(E_i) \cdot H(x))}{\rk E_i}=\frac{(c_1(E_j) \cdot H(x))}{\rk E_j},
$$
for $i<j$.
Then
$$
\frac{(c_1(E_i) \cdot f)}{\rk E_i}<\frac{(c_1(E_j) \cdot f)}{\rk E_j}
$$
for $i<j$.

We set ${\bf e}_i:=\tau(E_i)$.
$\mu_i:=\mu({\bf e}_i)$, $\Delta_i:=\Delta({\bf e}_i)$.
Then $((\mu_j-\mu_i) \cdot f)>0$.
By Lemma \ref{lem:(DK_X)}, we get $((\mu_j-\mu_i) \cdot K_X) \geq 0$.
Hence $-\chi({\bf e}_i,{\bf e}_j)>0$ for $i<j$. By Proposition \ref{prop:filt}, we get
\begin{equation}
\dim {\cal F}_{H(x-\epsilon)}({\bf e}_1,{\bf e}_2,...,{\bf e}_s)<2r^2 \Delta({\bf e})+r^2(g-1).
\end{equation}

\begin{NB}
Let $\Gamma$ be the set of ${\bf e}_i:=(r_i,\xi_i,\chi_i)$ such that (1)
$\Delta({\bf e}_i) \geq 0$ for $1 \leq i \leq s$, (2)
$\sum_i {\bf e}_i={\bf e}$ and (3)
$$
\frac{((\xi_i \cdot (H(x-\epsilon)))}{r_i}>\frac{(\xi_j \cdot (H(x-\epsilon)))}{r_j},\;
\frac{((\xi_i \cdot H(x))}{r_i}=\frac{(\xi_j \cdot H(x))}{r_j},
$$
for $i<j$.
Then 
$$
\cup_{({\bf e}_1,{\bf e}_2,...,{\bf e}_s) \in \Gamma} 
{\cal F}_{H(x-\epsilon)}({\bf e}_1,{\bf e}_2,...,{\bf e}_s)=
{\cal M}_{H(x)}({\bf e})^{\mu ss} \setminus {\cal M}_{H(x-\epsilon)}({\bf e})^{\mu ss},.
$$
and hence
$$
\max_{({\bf e}_1,{\bf e}_2,...,{\bf e}_s) \in \Gamma} 
\dim  {\cal F}_{H(x-\epsilon)}({\bf e}_1,{\bf e}_2,...,{\bf e}_s)=
\dim ({\cal M}_{H(x)}({\bf e})^{\mu ss} \setminus {\cal M}_{H(x-\epsilon)}({\bf e})^{\mu ss}).
$$
\end{NB}

\end{proof}

\begin{prop}\label{prop:N(x)}
Assume that $(K_X \cdot H(x))<0$.
Let $N(x)$ be the set of irreducible components ${\cal M}$ of ${\cal M}({\bf e})$
such that ${\cal M} \cap {\cal M}_{H(x)}({\bf e})^{\mu ss} \ne \emptyset$.
Then $N(x) \subset N(y)$ for $x \geq y$.
\end{prop}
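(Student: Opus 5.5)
We have to show that every irreducible component ${\cal M}$ of ${\cal M}({\bf e})$ meeting ${\cal M}_{H(x)}({\bf e})^{\mu ss}$ also meets ${\cal M}_{H(y)}({\bf e})^{\mu ss}$ whenever $y \leq x$. The plan is to move $y$ down from $x$ one wall at a time. The walls $W^F$ are locally finite, so the interval $[y,x]$ meets only finitely many of them, and chambers are open. Hence it suffices to prove the inclusion in two cases. In the first, there is no wall in $[y,x]$; then the two open substacks coincide and there is nothing to prove. In the second, $x$ lies on a wall and we pass to $x-\epsilon$; here we need $N(x)\subset N(x-\epsilon)$. A further move from $x-\epsilon$ down to the next wall stays inside a chamber. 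Along the way we keep $(K_X\cdot H(t))<0$ for all $t\in[y,x]$. I will assume this is part of the setup, since the hypothesis is presumably imposed at both endpoints, and $(K_X\cdot H(t))$ is affine in $t$, so the condition holds on the whole segment once it holds at the endpoints.

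For the wall crossing, fix ${\cal M}\in N(x)$. Put ${\cal U}:={\cal M}\cap{\cal M}_{H(x)}({\bf e})^{\mu ss}$. This is a nonempty open substack of ${\cal M}$, so it is irreducible of dimension $\dim{\cal M}$. Because $(K_X\cdot H(x))<0$, Proposition \ref{prop:dim}(2) says ${\cal M}_{H(x)}({\bf e})^{\mu ss}$ is smooth of pure dimension $2r^2\Delta({\bf e})-r^2\chi({\cal O}_X)$. On an elliptic ruled surface $\chi({\cal O}_X)=1-g=0$, and more generally $-r^2\chi({\cal O}_X)=r^2(g-1)$. So $\dim{\cal U}=2r^2\Delta({\bf e})+r^2(g-1)$. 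By Lemma \ref{lem:pss}, the closed complement ${\cal M}_{H(x)}({\bf e})^{\mu ss}\setminus{\cal M}_{H(x-\epsilon)}({\bf e})^{\mu ss}$ has strictly smaller dimension. Therefore it cannot contain ${\cal U}$, and ${\cal U}\cap{\cal M}_{H(x-\epsilon)}({\bf e})^{\mu ss}\neq\emptyset$. This gives ${\cal M}\in N(x-\epsilon)$.

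The main obstacle is the bookkeeping that makes this argument legitimate. First, we need that $\mu$-semistability for $H(x-\epsilon)$ implies $\mu$-semistability for $H(x)$ when $x$ is on a wall and $\epsilon$ is small. This is the usual openness argument: destabilizing subsheaves have bounded slopes, so a strict inequality at $x-\epsilon$ yields a non-strict one at $x$. Without this, the difference set in Lemma \ref{lem:pss} would not be the right object. Second, since ${\cal M}$ has dimension at least the expected dimension by Proposition \ref{prop:dim}(1), and ${\cal U}$ is smooth of exactly that dimension, ${\cal U}$ is open in ${\cal M}$ and the dimension comparison above is meaningful. Third, the intermediate points $x-\epsilon$ must still satisfy $(K_X\cdot H)<0$, which follows from the affineness in $t$ noted above. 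Induction over the finitely many walls in $[y,x]$ then gives $N(x)\subset N(y)$.
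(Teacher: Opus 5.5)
Your proposal is correct and is essentially the paper's argument: you descend wall by wall, using the openness of ${\cal M}_{H(t\pm\epsilon)}({\bf e})^{\mu ss}$ in ${\cal M}_{H(t)}({\bf e})^{\mu ss}$ to get $N(t\pm\epsilon)\subset N(t)$, and Proposition \ref{prop:dim}(2) together with Lemma \ref{lem:pss} to get $N(t)\subset N(t-\epsilon)$.

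One thing to tidy up concerns where you use the inclusion of semistable loci. The dimension step does not need it, since the set-theoretic complement already suffices. What it is actually needed for is landing on the next wall from the chamber above, i.e.\ ${\cal M}_{H(t+\epsilon)}({\bf e})^{\mu ss}\subset{\cal M}_{H(t)}({\bf e})^{\mu ss}$. This holds simply because each inequality $(c_1(F)\cdot H)/\rk F\le (c_1(E)\cdot H)/\rk E$ is a closed condition in $H$.
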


\begin{proof}
For an ample divisor $H(x)$, we take a sufficiently small $\epsilon>0$. 
Since ${\cal M}_{H(x \pm \epsilon)}({\bf e})^{\mu ss}$ are open substacks of ${\cal M}_{H(x)}({\bf e})^{\mu ss}$,
$N(x \pm \epsilon) \subset N(x)$.
By Proposition \ref{prop:dim} and Lemma \ref{lem:pss}, 
we also have $N(x)=N(x-\epsilon)$.
Therefore the claim holds.  
\end{proof}

\begin{lem}\label{lem:Delta}
Let $E$ be a $\mu$-semi-stable sheaf of rank $r$ on $X$ with respect to an ample divisor
such that $\Delta(E)=0$.
Then $E$ is $\mu$-semi-stable for any ample divisor and
$r \mid (c_1(E) \cdot f)$.  
\end{lem}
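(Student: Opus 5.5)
The plan is to reduce everything to the Bogomolov inequality combined with the Hodge index theorem, via the standard formula for $\Delta$ of an extension. Suppose $E$ has a filtration with factors $E_1,\dots,E_s$ of ranks $r_i$ and slopes $\mu_i$. Then
$$
\Delta(E)=\sum_i \frac{r_i}{r}\Delta(E_i)-\frac{1}{2r^2}\sum_{i<j} r_ir_j((\mu_i-\mu_j)^2).
$$
This is checked by additivity of $\rk$, $c_1$, $\chi$ in \eqref{eq:Delta1}. The $K_X$-terms are linear and cancel.

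\emph{Step 1: semistability for every ample divisor.} Let $E$ be $\mu$-semistable with respect to $H$, and take a Jordan--H\"older filtration. Each factor $E_i$ is $\mu$-stable, so $\Delta(E_i)\ge 0$ by Bogomolov. Since $((\mu_i-\mu_j)\cdot H)=0$, the Hodge index theorem gives $((\mu_i-\mu_j)^2)\le 0$, with equality only if $\mu_i\equiv\mu_j$. Hence $\Delta(E)=0$ forces $\Delta(E_i)=0$ and $\mu_i\equiv\mu(E)$ for all $i$. The same argument applies to any filtration of $E$ whose factors are $\mu$-semistable of the same $H$-slope, since each factor can be refined to a Jordan--H\"older filtration.

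Now let $S\subset\Amp(X)$ be the set of ample classes for which $E$ is $\mu$-semistable. $S$ is closed, because the relevant destabilizing subsheaves form a bounded family and the walls are locally finite. I will show $S$ is also open. Take $H\in S$ and $H'$ close to $H$, and suppose $F\subset E$ destabilizes $E$ with respect to $H'$. By local finiteness and continuity we get $((\mu(F)-\mu(E))\cdot H)\ge 0$, hence equality. Then $F$ and $E/F$ are $\mu$-semistable of the same $H$-slope. Applying the argument above to a Jordan--H\"older refinement of $0\subset F\subset E$ gives $\mu(F)\equiv\mu(E)$, so $F$ does not destabilize, a contradiction. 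Since $\Amp(X)$ is connected, $S=\Amp(X)$.

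\emph{Step 2: $r\mid (c_1(E)\cdot f)$.} Restrict $E$ to a general fiber $f\cong{\Bbb P}^1$. If $E_{|f}$ is not semistable, the relative Harder--Narasimhan filtration over the generic point of $C$ extends to a saturated subsheaf $F\subset E$ with $(\mu(F)\cdot f)>(\mu(E)\cdot f)$. Then
$$
((\mu(F)-\mu(E))\cdot H(x))=((\mu(F)-\mu(E))\cdot C_0)+x((\mu(F)-\mu(E))\cdot f)>0
$$
for $x\gg 0$, where $F$ is fixed and only $x$ varies. This contradicts Step 1. So $E_{|f}\cong{\cal O}(a)^{\oplus r}$, and therefore $(c_1(E)\cdot f)=ra$.

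The main obstacle is the openness part of Step 1. There one must justify that, for $H'$ near $H$, a destabilizing subsheaf with respect to $H'$ has the same $H$-slope as $E$. This needs the boundedness and local finiteness of walls from Definition \ref{defn:2-1}, applied to the family of saturated subsheaves of the fixed sheaf $E$. Bogomolov's inequality supplies condition (3) of that definition.
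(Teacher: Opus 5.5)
Your proof is correct and uses the same ingredients as the paper's: the decomposition formula for $\Delta$, Bogomolov on the pieces, and the Hodge index theorem to rule out any wall, then the polarizations $C_0+nf$ with $n\gg 0$ for divisibility. The difference is only in packaging. The paper takes one filtration $E_1\subset E$ with both pieces $\mu$-semistable and $((\mu(E_1)-\mu(E/E_1))\cdot H)\le 0<((\mu(E_1)-\mu(E/E_1))\cdot H')$, and applies Hodge index at an intermediate ample class, leaving the existence of such an $E_1$ implicit. You instead use Jordan--H\"{o}lder factors at $H$ plus an open--closed argument, and your generic-fiber argument proves the divisibility that the paper only asserts.

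One inaccuracy is in your openness step. Bogomolov does not supply condition (3) there, because an arbitrary $H'$-destabilizing $F$ need not have $F$ and $E/F$ semistable. The finiteness you need comes instead from boundedness of saturated subsheaves of the fixed sheaf $E$. Being destabilizing for some polarization in a compact neighbourhood of $H$, together with a uniform upper bound on slopes of subsheaves of $E$ there, confines $c_1(F)$ to finitely many numerical classes. Also, closedness of $S$ needs no boundedness argument, since $S$ is an intersection of closed half-spaces.
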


\begin{proof}
Assume that $E$ is $\mu$-semi-stable with respect to an ample divisor $H$ and
$E$ is not $\mu$-semi-stable with respect to an ample divisor $H'$.
Then there is a $\mu$-semi-stable subsheaf $E_1$ of $E$ satisfying
\begin{enumerate}
\item
$E_2:=E/E_1$ is $\mu$-semi-stable,
\item
$((\mu(E_1)-\mu(E_2)) \cdot H) \leq 0$ and $((\mu(E_1)-\mu(E_2)) \cdot H') > 0$.
\end{enumerate}
By Hodge index theorem, we get $((\mu(E_1)-\mu(E_2))^2)<0$.
By \cite[Lem. 2.1]{chamber},
$$
\Delta(E)=\frac{\rk(E_1)}{\rk(E)}\Delta(E_1)+\frac{\rk(E_2)}{\rk(E)}\Delta(E_2)-
\frac{\rk(E_1)\rk(E_2)}{2\rk(E)^2}((\mu(E_1)-\mu(E_2))^2).
$$
Since $\Delta(E_1) \geq 0$ and $\Delta(E_2) \geq 0$,
we have $((\mu(E_1)-\mu(E_2))^2) \geq 0$. Therefore $E$ is $\mu$-semi-stable for all
ample divisors. 
For an ample divisor $C_0+nf$ $(n \gg 0)$,
there is no $\mu$-semi-stable sheaf $E$ if $r \nmid (c_1(E) \cdot f)$. 
Therefore the claim holds.
\end{proof}

\begin{lem}\label{lem:mu-s}
We assume that $(K_X \cdot H)<0$. 
\begin{enumerate}
\item
Assume that $r \nmid (\xi \cdot f)$. Then
$$
\dim({\cal M}_H({\bf e})^{\mu ss} \setminus {\cal M}_H({\bf e})^{\mu s})<\dim {\cal M}_H({\bf e})^{\mu s}
$$
for a general $H$.  
\item
Assume that $r \mid (\xi \cdot f)$ and $\Delta>0$.
Then
$$
\dim({\cal M}_H({\bf e})^{\mu ss} \setminus {\cal M}_H({\bf e})^{\mu s})<\dim {\cal M}_H({\bf e})^{\mu s}
$$
for a general $H$.  
\end{enumerate}
\end{lem}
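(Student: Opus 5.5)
The plan is to stratify the properly $\mu$-semistable locus by Jordan--H\"older type and bound each stratum with Proposition \ref{prop:filt}. Let $E \in {\cal M}_H({\bf e})^{\mu ss} \setminus {\cal M}_H({\bf e})^{\mu s}$. A $\mu$-Jordan--H\"older filtration $0 \subset F_1 \subset \cdots \subset F_s=E$ has $\mu$-stable factors $E_i$ with $\tau(E_i)={\bf e}_i$ and equal $H$-slopes. By the Lemma preceding Proposition \ref{prop:dim} (which uses $(K_X\cdot H)<0$), $\Ext^2(E_j,E_i)=0$. Hence $E$ lies in the image of ${\cal F}_H({\bf e}_1,\dots,{\bf e}_s)$. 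Only finitely many types $({\bf e}_1,\dots,{\bf e}_s)$ occur with nonempty strata, by boundedness. So it suffices to show
$$\dim {\cal F}_H({\bf e}_1,\dots,{\bf e}_s) < 2r^2\Delta({\bf e})-r^2\chi({\cal O}_X)=\dim{\cal M}_H({\bf e})^{\mu ss}$$
for each type. By Proposition \ref{prop:filt}, this reduces to showing $\sum_{i<j}\chi({\bf e}_i,{\bf e}_j)<0$. Since ${\cal M}_H({\bf e})^{\mu ss}$ is smooth of pure dimension by Proposition \ref{prop:dim}(2), this bound also shows that ${\cal M}_H({\bf e})^{\mu s}$ is dense, which gives the inequality claimed.

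Choose $H$ general, meaning off all walls $W^F$ of Definition \ref{defn:2-1}. Then equal $H$-slope forces $\mu_i-\mu_j\equiv 0$ modulo classes orthogonal to every ample divisor, so effectively $\mu_i=\mu_j$ numerically. This is the key use of genericity: any potential difference $D=\mu_i-\mu_j\ne 0$ with $(D\cdot H)=0$ and $\Delta_i,\Delta_j\ge 0$ would put $H$ on a wall. With $\mu_i=\mu_j=\mu$, Riemann--Roch gives
$$\chi({\bf e}_i,{\bf e}_j)=r_ir_j(\chi({\cal O}_X)-\Delta_i-\Delta_j).$$
On an elliptic ruled surface $\chi({\cal O}_X)=1-g=0$, so $\chi({\bf e}_i,{\bf e}_j)=-r_ir_j(\Delta_i+\Delta_j)\le 0$, with strict inequality unless $\Delta_i=\Delta_j=0$. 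By Bogomolov all $\Delta_i\ge 0$, and $\Delta({\bf e})=\sum (r_i/r)\Delta_i$ because the slopes agree. In case (2) we have $\Delta>0$, so some $\Delta_i>0$, and then $\chi({\bf e}_i,{\bf e}_j)<0$ for that $i$ and any $j\ne i$. Hence the sum is negative.

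In case (1), suppose the sum is $0$. Then all $\Delta_i=0$, so $\Delta({\bf e})=0$. Lemma \ref{lem:Delta} applied to $E$ (which is $\mu$-semistable with $\Delta=0$) then gives $r\mid(\xi\cdot f)$, contradicting the hypothesis. So the sum is again negative.

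The main obstacle is the genericity step: one must justify that for $H$ outside the walls, factors of equal $H$-slope have numerically equal $\mu$. This needs the walls to be defined using exactly the pairs that can arise, namely subsheaves with $\Delta\ge 0$ of both pieces, which holds for the $\mu$-stable JH factors by Bogomolov. One should also take care with the case $r\nmid(\xi\cdot f)$ in (1) when $\Delta({\bf e})>0$; the argument of case (2) already covers it. A secondary point is the strict inequality of dimensions when strata have several components, which is harmless since there are finitely many types.
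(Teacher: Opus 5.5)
Your proposal is correct and follows the paper's proof. You take a $\mu$-Jordan--H\"older filtration, use genericity of $H$ to get $\mu(E_i)=\mu(E)$ so that $-\chi(E_i,E_j)=\rk(E_i)\rk(E_j)(g-1+\Delta(E_i)+\Delta(E_j))$, and conclude via Proposition \ref{prop:filt}, making explicit the steps the paper leaves implicit ($\Ext^2$-vanishing, the wall argument, purity of dimension). The only small difference is in case (1): the paper applies Lemma \ref{lem:Delta} to each factor ($\mu(E_i)=\mu(E)$ and $r\nmid(\xi\cdot f)$ force $r_i\nmid(c_1(E_i)\cdot f)$, so every $\Delta(E_i)>0$), whereas you apply it to $E$ itself to reduce to case (2).
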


\begin{proof}
For a $\mu$-semi-stable sheaf $E$,
let 
$F:0 \subset F_1 \subset F_2 \subset \cdots \subset F_s=E$ 
be a Jordan-H\"{o}lder filtration of a $\mu$-semi-stable sheaf $E$
with respect to $\mu$-stability.
We set $E_i=F_i/F_{i-1}$.
Then 
$$
-\chi(E_i,E_j)=\rk(E_i)\rk(E_j)(g-1+\Delta(E_i)+\Delta(E_j)).
$$
For the first case, Lemma \ref{lem:Delta} implies
$\Delta(E_i)>0$ for all $i$.
Hence $-\sum_{i<j}\chi(E_i,E_j)>0$.

For the second case, by 
$$
\rk E \Delta(E)=\sum_i \rk E_i \Delta(E_i)
$$
(\cite[Lem. 2.1]{chamber}),
$\Delta(E_i)>0$ for an integer $i$.
Hence $-\sum_{i<j}\chi(E_i,E_j)>0$.
\end{proof}

\begin{lem}\label{lem:non-locallyfree}
We assume that $(K_X \cdot H)<0$. Then 
$\dim ({\cal M}_H({\bf e})^{\mu s} \setminus {\cal M}_H({\bf e})_0^{\mu s})=\dim {\cal M}_H({\bf e})^{\mu s}-(r-1)$.
\end{lem}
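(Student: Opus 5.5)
We stratify the non-locally free locus by the length of $E^{\vee\vee}/E$. For $E \in {\cal M}_H({\bf e})^{\mu s}$, the double dual $E^{\vee\vee}$ is a $\mu$-stable locally free sheaf, and $Q:=E^{\vee\vee}/E$ is a $0$-dimensional sheaf of length $n\geq 1$. We have $\tau(E^{\vee\vee})={\bf e}+(0,0,n)=:{\bf e}(n)$, so $\Delta({\bf e}(n))=\Delta({\bf e})-n/r$. The map $E\mapsto (E^{\vee\vee},E^{\vee\vee}\to Q)$ identifies the stratum ${\cal S}_n$ (those $E$ with $\operatorname{length}(Q)=n$) with a fibration over ${\cal M}_H({\bf e}(n))_0^{\mu s}$. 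The fibre over $G$ is the Quot scheme $\mathrm{Quot}(G,n)$ of $0$-dimensional quotients of length $n$.

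Dimensions will be compared using Proposition \ref{prop:dim}(2), valid since $(K_X\cdot H)<0$. The base has dimension $2r^2\Delta({\bf e}(n))-r^2\chi({\cal O}_X)=2r^2\Delta({\bf e})-2rn-r^2\chi({\cal O}_X)$ whenever it is nonempty. For the fibre we use the known fact (Ellingsrud–Lehn, or the elementary count via punctual Quot schemes) that $\dim \mathrm{Quot}(G,n)=(r+1)n$ for $G$ locally free of rank $r$ on a surface. The open part where $Q$ is supported at $n$ distinct points has exactly this dimension: $2n$ for the points plus $(r-1)n$ for the choice of a quotient ${\Bbb P}^{r-1}$ at each point. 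The punctual part is known not to exceed it. Automorphisms match, because $\mu$-stable $E$ and $G$ both have only scalar automorphisms and $\Hom(E,E)=\Hom(E^{\vee\vee},E^{\vee\vee})$. Hence
\[
\dim {\cal S}_n \le 2r^2\Delta({\bf e})-r^2\chi({\cal O}_X)-2rn+(r+1)n = \dim{\cal M}_H({\bf e})^{\mu s}-(r-1)n .
\]
This is at most $\dim{\cal M}_H({\bf e})^{\mu s}-(r-1)$, with equality only possible for $n=1$.

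For the reverse inequality, we need ${\cal S}_1$ to be nonempty of the expected dimension. The fibre over any $G\in{\cal M}_H({\bf e}(1))_0^{\mu s}$ is ${\Bbb P}(G)$, of dimension $r+1$, and every kernel $E$ of $G\to k_p$ is again $\mu$-stable. So $\dim {\cal S}_1=\dim {\cal M}_H({\bf e}(1))_0^{\mu s}+r+1$, and it suffices that ${\cal M}_H({\bf e}(1))_0^{\mu s}\neq\emptyset$, which gives dimension exactly $\dim{\cal M}_H({\bf e})^{\mu s}-(r-1)$.

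The main obstacle is this nonemptiness. If ${\cal M}_H({\bf e})^{\mu s}$ has non-locally free members, some ${\cal S}_n$ is nonempty. Choosing a non-locally free $E$ and modifying $E^{\vee\vee}$ at fewer points (taking an intermediate quotient $E^{\vee\vee}\to Q\to Q'$ with $\operatorname{length}(Q')=1$) produces a $\mu$-stable sheaf in ${\cal S}_1$. Its double dual lies in ${\cal M}_H({\bf e}(1))_0^{\mu s}$. If instead every member is locally free, the statement is read with the convention that the left side is empty, or one assumes nonemptiness as the paper implicitly does. Finally, to conclude the dimension formula for the whole complement, take the union over $n\geq 1$ of the bounds for ${\cal S}_n$, using that $\Delta({\bf e}(n))\ge 0$ bounds $n$, so only finitely many strata occur.
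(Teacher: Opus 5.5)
\paragraph{Verdict.} The paper states this lemma without proof. Your stratification by $\operatorname{length}(E^{\vee\vee}/E)$, together with $\dim \mathrm{Quot}(G,n)=(r+1)n$, is the standard argument. It correctly gives the upper bound $\dim{\cal S}_n\le\dim{\cal M}_H({\bf e})^{\mu s}-(r-1)n$, and that bound is the only part used in Proposition \ref{prop:irred}. A minor technical point: realize ${\cal S}_n$ as the image of the relative Quot scheme of length-$n$ quotients over ${\cal M}_H({\bf e}(n))^{\mu s}_0$. Taking kernels is compatible with flat families and is bijective onto ${\cal S}_n$ on points, whereas $E\mapsto E^{\vee\vee}$ is not obviously a morphism. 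The genuine gap is in your nonemptiness step for the reverse inequality.

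\paragraph{The failing step.} Take $E\in{\cal S}_n$ and $E^{\vee\vee}\to Q\to Q'$ with $\operatorname{length}(Q')=1$. The kernel $E'$ of $E^{\vee\vee}\to Q'$ has $\chi(E')=\chi(E)+n-1$, so $\tau(E')={\bf e}(n-1)$. Thus $E'$ lies in the stratum ${\cal S}_1$ of the different stack ${\cal M}_H({\bf e}(n-1))^{\mu s}$. Its double dual is still $E^{\vee\vee}$, whose invariant is ${\bf e}(n)$, not ${\bf e}(1)$. So for $n\ge 2$ you have produced no point of ${\cal M}_H({\bf e}(1))^{\mu s}_0$. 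No elementary modification of $E^{\vee\vee}$ can do this, since every such modification has double dual $E^{\vee\vee}$. What is needed is a $\mu$-stable locally free sheaf of the smaller discriminant $\Delta({\bf e})-1/r$, and that is a genuine existence statement.

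\paragraph{The missing idea and the repair.} The missing idea is density of the locally free locus, and it follows from your own bound.
\begin{itemize}
\item Let $r\ge 2$ and let ${\bf e}'$ be arbitrary. By Proposition \ref{prop:dim}(2), ${\cal M}_H({\bf e}')^{\mu s}$ is smooth, so every component has dimension $2r^2\Delta({\bf e}')-r^2\chi({\cal O}_X)$.
\item The union $\bigcup_{m\ge1}{\cal S}_m({\bf e}')$ is finite, since Bogomolov applied to $E^{\vee\vee}$ bounds $m$. Each of these strata has strictly smaller dimension.
\item Hence ${\cal M}_H({\bf e}')^{\mu s}_0$ is dense in ${\cal M}_H({\bf e}')^{\mu s}$ whenever the latter is nonempty.
\end{itemize}
Now choose $Q'$ of length $n-1$ instead of length $1$. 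Then $E''=\ker(E^{\vee\vee}\to Q')$ is $\mu$-stable with $\tau(E'')={\bf e}(1)$; it is still non-locally free if $n\ge2$. So ${\cal M}_H({\bf e}(1))^{\mu s}\neq\emptyset$, and by density ${\cal M}_H({\bf e}(1))^{\mu s}_0\ne\emptyset$. Your computation $\dim{\cal S}_1=\dim{\cal M}_H({\bf e})^{\mu s}-(r-1)$ then finishes the proof.

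\paragraph{Remaining cases.} For $r=1$ the claim is immediate, since $r-1=0$ and $n$ is determined by ${\bf e}$. When $0\le\Delta({\bf e})<1/r$, Bogomolov for $E^{\vee\vee}$ forces the left-hand side to be empty. So the stated equality does require a nonemptiness hypothesis, as you noted.
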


\begin{prop}\label{prop:irred}
We assume that $(K_X \cdot H)<0$. 
If $r \nmid (\xi \cdot f)$ or $\Delta>0$,
then ${\cal M}_H({\bf e})^{\mu s}_0$ is an open dense substack of ${\cal M}_H({\bf e})^{\mu ss}$.
In particular ${\cal M}_H({\bf e})^{\mu ss}$ is irreducible.
\end{prop}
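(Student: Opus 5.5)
Since $(K_X\cdot H)<0$, Proposition \ref{prop:dim} (2) tells us that ${\cal M}_H({\bf e})^{\mu ss}$ is smooth and equidimensional of dimension $2r^2\Delta({\bf e})-r^2\chi({\cal O}_X)$. So its irreducible components are exactly its connected components, and each of them has this expected dimension. The plan is to show that no component can lie in the complement of ${\cal M}_H({\bf e})^{\mu s}_0$, and then use the irreducibility of the locally free $\mu$-stable part.

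First I treat a general $H$, meaning one lying in a chamber for ${\bf e}$. Under either hypothesis, $r\nmid(\xi\cdot f)$ or $\Delta>0$, Lemma \ref{lem:mu-s} gives
\[
\dim({\cal M}_H({\bf e})^{\mu ss}\setminus{\cal M}_H({\bf e})^{\mu s})<\dim{\cal M}_H({\bf e})^{\mu s}.
\]
Lemma \ref{lem:non-locallyfree} then shows that the non-locally-free $\mu$-stable locus has dimension $\dim{\cal M}_H({\bf e})^{\mu s}-(r-1)$. I would check that this is strictly smaller whenever $r>1$; for $r=1$ the statement is just about twisted ideal sheaves, i.e.\ Hilbert schemes of points times $\Pic$, which is irreducible anyway. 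Since every component has the full expected dimension, neither of these two smaller loci can contain a component. Hence ${\cal M}_H({\bf e})^{\mu s}_0$ is open and dense.

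Next I need irreducibility of ${\cal M}_H({\bf e})^{\mu s}_0$. Corollary \ref{cor:Beauville} says that the coarse space $M_H({\bf e})^{\mu s}_0$ is irreducible. The stack ${\cal M}_H({\bf e})^{\mu s}_0$ is a ${\Bbb G}_m$-gerbe over this space, because $\mu$-stable sheaves are simple. A gerbe over an irreducible space is irreducible, so ${\cal M}_H({\bf e})^{\mu s}_0$ is irreducible. Together with density, this gives irreducibility of ${\cal M}_H({\bf e})^{\mu ss}$.

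For $H$ lying on a wall I intend to use Proposition \ref{prop:N(x)}, working with $H=H(x)$ as in the paper's setting. The open substacks ${\cal M}_{H(x\pm\epsilon)}({\bf e})^{\mu ss}$ are generic, so they are irreducible by the previous steps. By the dimension estimate of Lemma \ref{lem:pss}, every component of ${\cal M}_{H(x)}({\bf e})^{\mu ss}$ meets ${\cal M}_{H(x-\epsilon)}({\bf e})^{\mu ss}$, which forces irreducibility. I would then also check directly that the $\mu$-stable locally free locus is dense here: it is nonempty in the generic nearby chamber, and the same dimension counts apply.

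The main obstacle is the dimension comparison in the nongeneric case, together with making the dimension statements of Lemma \ref{lem:mu-s} apply uniformly. My first step there is to reduce to the chamber case through Lemma \ref{lem:pss}.
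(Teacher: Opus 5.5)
For $H$ in a chamber, your argument is the paper's proof. You get pure dimension and smoothness from Proposition \ref{prop:dim}. Lemma \ref{lem:mu-s} makes the strictly $\mu$-semistable locus small, and Lemma \ref{lem:non-locallyfree} does the same for the non-locally-free locus. Corollary \ref{cor:Beauville} then gives irreducibility; your gerbe remark just makes the passage from the coarse space to the stack explicit.

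Two of your additions are correct and fill gaps in the paper:
\begin{itemize}
\item The $r=1$ caveat. For $r=1$ and $\Delta>0$ the stack contains no locally free sheaves, so only irreducibility survives.
\item The irreducibility argument on a wall. By Lemma \ref{lem:pss}, every component of the smooth stack ${\cal M}_{H(x)}({\bf e})^{\mu ss}$ meets the irreducible open substack ${\cal M}_{H(x-\epsilon)}({\bf e})^{\mu ss}$. Components of a smooth stack are disjoint, so there is only one.
\end{itemize}

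The step that fails is the last one: density of the $\mu$-stable locally free locus on a wall by ``the same dimension counts''. A sheaf that is $\mu$-stable for $H(x-\epsilon)$ may become strictly $\mu$-semistable at $H(x)$. It is then unstable for $H(x+\epsilon)$. Its Harder--Narasimhan factors with respect to $H(x+\epsilon)$ satisfy $((\mu_j-\mu_i)\cdot f)<0$, so the sign in Lemma \ref{lem:(DK_X)} reverses and $-\chi({\bf e}_i,{\bf e}_j)$ can vanish. Lemma \ref{lem:pss} only controls the $x-\epsilon$ side.

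In fact the claim is false. Take the wall $H(x_0)$ of Proposition \ref{prop:1}.
\begin{itemize}
\item The extensions \eqref{eq:gen} are strictly $\mu$-semistable for $H(x_0)$.
\item Since $((\mu_2-\mu_1)\cdot f)=-1$, one computes $\chi({\bf e}_1,{\bf e}_2)=0$. So these extensions form a locus of full dimension $-\chi({\bf e},{\bf e})$.
\item The strictly $\mu$-semistable locus is closed, so irreducibility forces it to be all of ${\cal M}_{H(x_0)}({\bf e})^{\mu ss}$. Hence ${\cal M}_{H(x_0)}({\bf e})^{\mu s}=\emptyset$.
\end{itemize}
Equivalently, a sheaf $\mu$-stable for $H(x_0)$ would stay $\mu$-stable for $H(x_0+\epsilon)$, contradicting Proposition \ref{prop:1}. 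A concrete case satisfying every hypothesis of the proposition is $e=0$, ${\bf e}=(2,C_0+f,1)$, with $\Delta=\frac14$ and $x_0=1$.

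So the density assertion holds only for $H$ in a chamber. That is all the paper's proof gives anyway, because Lemma \ref{lem:mu-s} uses $\mu(E_i)=\mu(E)$ for the Jordan--H\"older factors. You should drop the wall-density claim and keep only irreducibility on walls.
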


\begin{proof}
By Lemma \ref{lem:mu-s} and Lemma \ref{lem:non-locallyfree}, we get the first claim.
Then the irreducibility follows from Corollary \ref{cor:Beauville}.
\end{proof}

\begin{rem}
If $\Delta=0$, then
there is no wall (Lemma \ref{lem:Delta}). Hence all $E \in {\cal M}_H({\bf e})^{\mu ss}$ are pull-back of
vector bundles on $C$, which shows the irreducibility of ${\cal M}_H({\bf e})^{\mu ss}$.
\end{rem}

\begin{NB}
 Let $W_x$ be a wall containing $H(x):=C_0+xf$ and let ${\cal C}_x^+$ (resp. ${\cal C}_x^-$) a chamber 
containing $H(x+\epsilon)$ (resp. $H(x-\epsilon)$) with $0< \epsilon \ll 1$.
For $(\gamma{}_1,\cdots,\gamma{}_s) \in \Gamma{}_{H_x,{\cal C}_x^-}$, 
we shall prove that $d_{\gamma{}_1,\cdots,\gamma{}_s} \geq 2$. 
Since $(\mu{}_j-\mu{}_i)^2<0$ and $\Delta{}_i \geq 0$, 
it is enough to prove that $r_ir_j(\mu{}_j-\mu{}_i,K_X/2) \geq 1$ for $i <j$.
We denote $r_ir_j(\mu{}_j-\mu{}_i)$ by $aC_0-bg$, and then $a$ and $b$ are positive integer.
A simple calculation shows that $r_ir_j(\mu{}_j-\mu{}_i,K_X/2)=b+ae/2 $. 
Therefore $d_{\gamma{}_1,\cdots,\gamma{}_s} \geq 1$.
In particular, if ${\cal M}_{H_x}^{\gamma{}}$ has $n$-irreducible components, then  
${\cal M}_{{\cal C}_x^-}^{\gamma{}}$ also has $n$-irreducible components.
\end{NB}

\subsection{Non-emptiness for $g=1$.}

 \begin{prop}\label{prop:1}
Assume that $g=1$.
For ${\bf e}=(r,\xi,\chi)$ with $0<(\xi \cdot f)<r$,
there exists a $\mu$-semi-stable sheaf $E$ of $\tau(E)={\bf e}$ with respect to $H(x)$ 
if and only if $x \leq \frac{e}{2}+\frac{r^2}{r_1r_2} \Delta({\bf e})$,
where $r_1:=(\xi \cdot f)$ and $r_2:=r-r_1$.
\end{prop}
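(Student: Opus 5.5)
Both directions go through the restriction of $E$ to the fibers $f\cong{\Bbb P}^1$. Since $0<r_1<r$, the restriction of $E$ to a general fiber is never trivial up to twist.

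\emph{Necessity.} Let $E$ be $\mu$-semi-stable with respect to $H(x)$. I may replace $E$ by $E^{\vee\vee}$, which is still $\mu$-semi-stable and satisfies $\Delta(E^{\vee\vee})\le\Delta(E)$, so assume $E$ is locally free. On a general fiber, $E_{|f}\cong\bigoplus_k{\cal O}(a_k)$. Let $a$ be the largest degree and $r'$ its multiplicity. Then $r'<r$, because $r\nmid r_1$.
\begin{itemize}
\item Relative Harder--Narasimhan with respect to $f$ gives a saturated subsheaf $F\subset E$ with $\rk F=r'$ and $(c_1(F)\cdot f)=r'a$. Its quotient $G=E/F$ has fiber degrees strictly less than $a$.
\item The normalised choice is the one for which $E_{|f}$ is rigid, i.e. the $a_k\in\{a,a-1\}$. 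In that case $a=\lceil r_1/r\rceil=1$ after twisting by ${\cal O}(-nC_0)$ (which does not change $\Delta$), and $F$ has rank $r_1$.
\item In general, set $D:=\mu(F)-\mu(G)$. Then $(D\cdot f)>0$, and $\mu$-semi-stability gives $(D\cdot H(x))\le 0$.
\item Apply the decomposition formula from the proof of Lemma \ref{lem:Delta} together with Bogomolov $\Delta(F),\Delta(G)\ge0$:
\[
r^2\Delta(E)\ \ge\ -\tfrac{r'(r-r')}{2}(D^2).
\]
\item Write $D=\alpha C_0+\beta f$ with $\alpha=(D\cdot f)>0$. From $(D\cdot H(x))=\alpha(x-e)+\beta\le0$ we get
\[
-(D^2)=\alpha^2e-2\alpha\beta\ \ge\ \alpha^2(2x-e).
\]
\item Therefore $r^2\Delta\ge \tfrac{r'(r-r')}{2}\alpha^2(2x-e)$.
\end{itemize}
The main obstacle is to show that the minimum of $r'(r-r')\alpha^2$ over possible splitting types is at least $r_1r_2$. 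In the rigid case $\alpha=\tfrac1{r_1}-\tfrac{0}{r_2}$... more precisely $\alpha=1/r_1\cdot r_1/r'$-type quantities. I would compute $\alpha=a-\tfrac{r_1-r'a}{r-r'}$ directly and check the inequality $r'(r-r')\alpha^2\ge r_1r_2$ by elementary estimates, with equality in the rigid case. That yields $x\le \tfrac e2+\tfrac{r^2}{r_1r_2}\Delta$.

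\emph{Sufficiency.} First I use Proposition \ref{prop:N(x)}. Semi-stable objects for $H(x)$ lie in components that persist for all $y\le x$, and $\mu$-semi-stability is open in the polarisation in the appropriate direction. So it suffices to construct, for $x_0:=\tfrac e2+\tfrac{r^2}{r_1r_2}\Delta$, a sheaf that is $\mu$-semi-stable for $H(x_0)$, and then show nonemptiness persists as $x$ decreases.

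For the persistence step I would use Lemma \ref{lem:pss}: the stack of sheaves that are $H(x)$-semistable but not $H(x-\epsilon)$-semistable has dimension strictly below the expected dimension. Hence a component of ${\cal M}_{H(x)}^{\mu ss}$, which has exactly the expected dimension by Proposition \ref{prop:dim}, contains $H(x-\epsilon)$-semistable points.

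For the construction at $x_0$, I take $\mu$-semi-stable locally free $F$ of rank $r_1$ and $G$ of rank $r_2$ with $\Delta(F)=\Delta(G)=0$. These can be pullbacks of semistable bundles on $C$ twisted by line bundles, chosen with $(c_1(F)\cdot f)=r_1$, $(c_1(G)\cdot f)=0$ and suitable $\chi$, so that $(\mu(F)-\mu(G))\cdot H(x_0)=0$. Then $F\oplus G$ is $\mu$-semi-stable for $H(x_0)$ by equality of slopes, and by the decomposition formula it realises the required $\Delta$.

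The integrality of the numerical data in this construction is a real obstacle. In general, $\Delta({\bf e})$ at the boundary forces $\beta$ to be a particular rational number, and $\chi$ must be adjusted. I would handle the case where $\Delta$ exceeds the boundary value by elementary modifications: replacing $E$ by $\ker(E\to{\cal O}_p)$ raises $\Delta$ by $1/r$ and preserves $\mu$-semi-stability. This reduces everything to the minimal-$\Delta$ case, which is handled by the direct-sum construction, choosing the pair $(F,G)$ by a Euclidean-algorithm-type step.
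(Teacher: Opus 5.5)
\textbf{The sufficiency half is essentially the paper's argument. The necessity half has a genuine gap.} Sufficiency goes the same way in both: pull-backs from $C$ that have equal slope at $H(x_0)$, then Proposition \ref{prop:N(x)} to move down in $x$. The gap in necessity sits at the inequality $r^2\Delta(E)\ge -\frac{r'(r-r')}{2}(D^2)$, and it has two independent parts.

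\emph{Bogomolov does not apply to $F$ and $G$.} They are the top piece of the fiberwise Harder--Narasimhan filtration and its quotient, and nothing makes them $\mu$-semi-stable for any polarization. For $F$ you can still get $\Delta(F)\ge 0$ by another route, because $F$ is fiber-balanced. The map $\pi^*\pi_*(F(-aC_0))\hookrightarrow F(-aC_0)$ has cokernel $T$ supported on finitely many fibers with $\pi_*T=0$. Hence $\chi(T)\le 0$, $c_1(F(-aC_0))^2=0$, and $c_2=m-\chi(T)\ge 0$, where $c_1(T)\equiv mf$. But $G$ is not fiber-balanced outside the rigid case, and there is no reason for $\Delta(G)\ge 0$.

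\emph{The postponed ``elementary estimate'' is false.} From $\alpha=\frac{ar-r_1}{r-r'}$ you get $r'(r-r')\alpha^2=\frac{r'(ar-r_1)^2}{r-r'}$. Take $r=12$, $r_1=6$ and generic splitting type ${\cal O}(2)\oplus{\cal O}(1)^{\oplus 4}\oplus{\cal O}^{\oplus 7}$, so $a=2$ and $r'=1$. Then this quantity is $324/11<36=r_1r_2$. So splitting off only the top piece cannot give the sharp bound, even granting $\Delta(G)\ge 0$. With the fiber-balanced argument in place of Bogomolov, your two-step argument is correct exactly when $E_{|f}$ is rigid, and you give no reduction to that case.

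The paper avoids all this by deducing necessity from irreducibility (Proposition \ref{prop:irred}). The extensions \eqref{eq:gen} fill a whole component of ${\cal M}_{H(x_0)}({\bf e})^{\mu ss}$, since $\chi({\bf e}_1,{\bf e}_2)=0$. That component is destabilized for $y>x_0$, so an $H(y)$-semi-stable sheaf with $y>x_0$ would produce two components in $N(x)$ for $x\le x_0$.

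\textbf{Repairing the direct route.} Use the \emph{full} fiberwise filtration $0\subset F_1\subset\cdots\subset F_s=E$, whose factors $E_i$ have generic type ${\cal O}(a_i)^{\oplus\rho_i}$ with $a_1>\cdots>a_s$.
\begin{enumerate}
\item Each $E_i$ is fiber-balanced, so $\Delta(E_i)\ge 0$ by the argument above.
\item Then $r^2\Delta(E)=r\sum_i\rho_i\Delta(E_i)-\frac12\sum_{i<j}\rho_i\rho_j(D_{ij}^2)$, where $D_{ij}=\mu(E_i)-\mu(E_j)$.
\item Write $-(D_{ij}^2)=(a_i-a_j)^2(2x-e)-2(a_i-a_j)(D_{ij}\cdot H(x))$.
\item Abel summation gives $\sum_{i<j}\rho_i\rho_j(a_i-a_j)(D_{ij}\cdot H(x))=\sum_{k<s}(a_k-a_{k+1})\bigl(r\,(c_1(F_k)\cdot H(x))-\rk F_k\,(\xi\cdot H(x))\bigr)$. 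This is $\le 0$ by $\mu$-semi-stability applied to each $F_k\subset E$.
\item Also $\sum_{i<j}\rho_i\rho_j(a_i-a_j)^2=r\sum_i\rho_ia_i^2-r_1^2\ge rr_1-r_1^2=r_1r_2$, since $a_i^2\ge a_i$ for integers.
\end{enumerate}
Together these give $r^2\Delta\ge\frac{r_1r_2}{2}(2x-e)$, which is the claim. This would be a genuinely more elementary proof than the paper's, since it avoids Beauville--Markman and Nakashima.

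\textbf{The integrality obstacle in your sufficiency part does not exist.} Fix $\tau$, not $x_0$. Take $\deg V_1=\chi-d+r_1e$ and $\deg V_2=d-\deg V_1$, where $\xi=r_1C_0+df$. Then $\tau(\pi^*V_1(C_0)\oplus\pi^*V_2)={\bf e}$ for every $\chi$. Both summands have $\Delta=0$ and $(D\cdot f)=1$, so their slopes coincide exactly at $H(x_0)$. No elementary modifications or Euclidean-type step is needed.
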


\begin{proof}
If $e>0=2g-2$, then the claim is proved in \cite[Prop. 4.1]{chamber}.
So we assume that $e=0,-1$. 
\begin{NB}
$(K_X \cdot C)=e$ and $(K_X \cdot H(x))=e-2x$.
Hence $((K_X+f) \cdot H(x))<0$ if and only if
$x>(e+1)/2$.
Hence if $x>(e+1)/2$, then the claim is contained in 
\cite{chamber}. 
\end{NB}
We set $\xi=r_1 C_0+df$.
Let $E$ be a vector bundle of rank $r$  
defined by the following exact sequence
\begin{equation}\label{eq:gen}
 0 \to F_1(C_0) \to E \to F_2 \to 0,
\end{equation}
where $F_1$ (resp. $F_2$) is the pull-back of a semi-stable vector bundle of rank $r_1$ (resp. $r_2$) on $C$ 
with degree $d_1=r_1 e-d+\chi-(r+r_1)(1-g)$ (resp. $d-d_1$).
Then 
\begin{equation}
\begin{split}
\chi(E)=& \chi(F_1(C_0))+\chi(F_2)\\
=& d+d_1+(r+r_1)(1-g)-r_1 e=\chi.
\end{split}
\end{equation}
Hence we get 
$\tau(E)=(r,r_1 C_0+df,\chi)={\bf e}$.
We set
$$
x_0:=\frac{e}{2}+\frac{r^2}{r_1 r_2}\Delta(E)=e+\frac{1}{r_1 r_2}(r_1 d-r d_1).
$$
Then 
$((\mu(F_1(C_0))-\mu(F_2)) \cdot H(x))=0$ if and only if
$x=x_0$.
Hence $E \in {\cal M}_{H(x_0)}({\bf e})^{\mu ss}$.
Thus ${\cal M}_{H(x_0)}({\bf e})^{\mu ss}$ is not empty.
By Proposition \ref{prop:N(x)},
${\cal M}_{H(x)}({\bf e})^{\mu ss}$ is also nonempty for $x<x_0$.

Conversely assume that there is a $\mu$-semi-stable sheaf $E$ with respect to $H(y)$ such that
$\tau(E)={\bf e}$. By Lemma \ref{lem:Delta}, $\Delta({\bf e})>0$.
We set $x_0:=\frac{e}{2}+\frac{r^2}{r_1 r_2}\Delta({\bf e})$.
Then $H(x_0)$ is an ample divisor, since $e \leq 0$.
We have an irreducible component of ${\cal M}_{H(x_0)}({\bf e})^{\mu ss}$
parameterizing $E$ fitting in the exact exact sequence
\eqref{eq:gen}.
Assume that $y>x_0$.
By Proposition \ref{prop:N(x)},
$N(y) \subsetneq N(x_0)$. 
Hence ${\cal M}_{H(x)}({\bf e})^{\mu ss}$ has at least two irreducible components for $x \leq x_0$.
Therefore $y \leq x_0$. 
\begin{NB}
If $E_{|\pi^{-1}(\eta)} \cong {\cal O}_{\pi^{-1}(\eta)}(1)^{\oplus r_1} \oplus
 {\cal O}_{\pi^{-1}(\eta)}^{\oplus r_2}$, then
$\Ext^2(E,E(-f))=0$. Hence
$E$ deforms $E_{|\pi^{-1}(t)} \cong  {\cal O}_{\pi^{-1}(t)}(1)^{\oplus r_1} \oplus
 {\cal O}_{\pi^{-1}(t)}^{\oplus r_2}$ for all $t \in C$.
\end{NB}
\end{proof}

\begin{rem}
By the proof of Proposition \ref{prop:1},
a general member $E \in {\cal M}_{H(x)}({\bf e})^{\mu ss}$ is a locally free sheaf 
such that $\Ext^1(E_{|\pi^{-1}(t)},E_{|\pi^{-1}(t)})=0$ for a general $t \in C$.
\end{rem}

\end{document}